\documentclass{amsart}
\usepackage{amsfonts,amscd}

\newtheorem{claim}{}[section]
\newtheorem{theorem}[claim]{Theorem}
\newtheorem{lemma}[claim]{Lemma}
\newtheorem{proposition}[claim]{Proposition}
\newtheorem{corollary}[claim]{Corollary}

\newtheorem{definition}[claim]{Definition}

\def\proclaim #1. #2\par{\medbreak
\noindent{\bf#1.\enspace}{\sl#2}\par\medbreak}
\makeatother

\DeclareMathOperator{\Cdb}{{\mathbb C}}

\DeclareMathOperator{\Ddb}{{\mathbb D}}
\DeclareMathOperator{\Tdb}{{\mathbb T}}

\begin{document}

\title[Rigged modules over dual operator algebras]{Rigged modules I: modules over dual operator algebras and the Picard group}

\author{David P. Blecher and Upasana Kashyap}
\address{Department of Mathematics, University of Houston,
Houston, TX  77204-3008}
\email{dblecher@math.uh.edu}

\address{Department of STEM, Regis College,
Weston, MA 02493}
\email{upasana.kashyap@regiscollege.edu}
\date{\today}

\keywords{$W^{*}$-algebra; Hilbert $C^*$-module; Operator algebra; Dual operator algebra;
 $W^*$-module; weak*-rigged module; Morita equivalence; von Neumann algebra; Picard group}

\begin{abstract}
In a previous paper we generalized the theory of $W^*$-modules to the setting of modules over nonselfadjoint dual
operator algebras, obtaining the class of weak$^*$-rigged modules.  At that time  we 
promised a forthcoming paper devoted to other aspects of the theory. We fulfill this promise in the present work and its sequel ``Rigged modules II",  giving many new results about  weak$^*$-rigged modules and their tensor products.  We also discuss the Picard group of weak* closed subalgebras of a commutative algebra.  For example, we compute the weak Picard group of $H^{\infty}(\mathbb{D})$, and prove that for a weak* closed function algebra $A$, the weak Picard
group is a semidirect
product of the automorphism group of $A$,  and the subgroup 
consisting of symmetric equivalence bimodules.

\end{abstract}

\maketitle
\section{Introduction and notation}    The most important class of modules over a $C^*$-algebra $M$ are 
the Hilbert $C^*$-modules: the modules possessing an $M$-valued inner product satisfying 
the natural list of axioms (see \cite{Lance} or \cite[Chapter 8]{DBbook}).  
A {\em $W^*$-module} is a Hilbert $C^*$-module over a von Neumann algebra which is selfdual (that is,
it satisfies the module variant of the fact from Hilbert space theory
that every continuous functional is given by inner product with a fixed vector, see e.g.\ \cite{Pas,Bsd}), or equivalently
which has a Banach space predual (see e.g.\ \cite[Corollary 3.5]{BM}).    A {\em dual operator algebra} is a unital weak* closed
algebra of operators on a Hilbert space (which is not necessarily selfadjoint).
One may think of a dual operator algebra as a nonselfadjoint  analogue of a 
von Neumann algebra.  The {\em weak$^*$-rigged} or  $w^*$-{\em rigged modules}, introduced in \cite{BK2} (see also \cite{BKraus,UK}), are a generalization of $W^*$-modules to the setting of modules over a (nonselfadjoint) dual
operator algebra. 
In \cite{BK2}  we generalized basic aspects of the theory of $W^*$-modules, and this may be seen also as the weak* variant of the theory of rigged modules from \cite{DB1} (see also \cite{BMP}). 
In that paper we promised that some other aspects of the theory of weak$^*$-rigged modules 
would be presented elsewhere.  Since that time is now long overdue, and since there has been some recent interest in rigged modules and related 
objects (see e.g.\ \cite{Mes} or the survey \cite{Elef} of Eleftherakis' work),  we 
follow through on our promise here and in the sequel \cite{DBr}.   

The present paper and its sequel consists of several topics and results about 
weak$^*$-rigged modules, mainly concerning  their tensor products.   For example following the route in \cite{BJ} we study the Picard group of weak* closed subalgebras of a commutative algebra.
For a weak* closed function algebra $A$, the weak Picard
group $Pic_{w}(A)$ is a semidirect
product of  Aut$(A)$, the automorphism group of $A$,  and the subgroup of $Pic_{w}(A)$
consisting of symmetric equivalence bimodules. In particular, we show that the weak Picard group of $H^{\infty}(\mathbb{D})$ is isomorphic to the group of conformal automorphisms of the disk.

We will use the notation from \cite{BK1,BK2,UK1}, and perspectives from \cite{DB2}. We will assume that the
reader is familiar with basic notions from operator space theory which may be found in any current text on that subject
(such as \cite{ERbook,DBbook}). The reader may
consult \cite{DBbook} as a reference for any other unexplained terms
here. We also assume that the reader is familiar with basic Banach space and operator space duality principles,
e.g., the Krein-Smulian Theorem (see e.g., Section 1.4, 1.6, Appendix A.2 in \cite{DBbook}). 
We often abbreviate `weak$^*$' to `$w^*$'.    We use the letters $H,K$ for Hilbert spaces.  
By a nonselfadjoint analogue of Sakai's theorem (see e.g.\ Section 2.7 in \cite{DBbook}), 
a dual operator algebra $M$ is characterized as a unital operator algebra which is also a dual operator
space.   By a {\em normal morphism} we shall always mean a
unital weak* continuous completely contractive homomorphism
on a dual operator algebra.
 A concrete dual operator $M$-$N$-bimodule is a weak* closed subspace $X$ of $B(K, H)$ such that 
$\theta(M)X \pi(N) \subset X$, where $\theta$ and $\pi$ are normal morphism representations of $M$ and $N$ on $H$ and $K$ respectively. An abstract dual operator $M$-$N$-bimodule is defined to be a nondegenerate operator $M$-$N$-bimodule $X$, which is also a dual operator space, such that the module actions are separately weak* continuous. Such spaces can be represented completely isometrically as concrete dual operator bimodules, and in fact this can be done under even weaker hypotheses (see e.g.\ \cite{DBbook,BM,ER}) and similarly for one-sided modules (the case $M$ or $N$ equals $\Cdb$). We use standard notation for module mapping spaces; e.g.\ $CB(X, N)_N$ (resp.\ $CB^\sigma(X, N)_N$) are the completely bounded (resp.\ and weak* continuous) right $N$-module maps from $X$ to $N$.  We often use the {\em normal module Haagerup tensor product} $\otimes^{\sigma h}_{M}$, and its universal property from \cite{EP}, which loosely says that it `linearizes'
completely contractive $M$-balanced separately weak* continuous bilinear maps (balanced means that 
$u(xa,y) = u(x,ay)$ for $a \in M$).  We assume that the reader is familiar with the notation and facts about this tensor product from \cite[Section 2]{BK1}.   For any operator space $X$ we write $C_n(X)$ for the column space of $n \times 1$ matrices with entries in $X$, with its canonical norm from operator space theory.  

\begin{definition} \cite{BK2} \label{wrig} Suppose that $Y$ is a dual operator
space and a right module over a dual operator algebra $M$. Suppose
that there exists a net of positive integers $(n(\alpha))$, and
$w^*$-continuous completely contractive $M$-module maps
$\phi_{\alpha} : Y \to C_{n(\alpha)}(M)$ and $\psi_{\alpha} :
C_{n(\alpha)}(M) \to Y$, with $\psi_{\alpha}( \phi_{\alpha}(y))  \to
y$ in the weak* topology on $Y$, for all $y \in Y$.   Then we say
that $Y$ is a {\em right $w^*$-rigged module} (or {\em weak$^*$-rigged module}) over $M$.
\end{definition}

As on p.\ 348 of \cite{BK2}, the operator space structure of a 
$w^*$-rigged module $Y$ over  $M$ is determined by
$$\Vert [y_{ij}] \Vert_{M_n(Y)}
= \sup_\alpha \; \Vert [\phi_{\alpha}(y_{ij})] \Vert , \qquad
[y_{ij}] \in M_n(Y). $$

The following seems not to have been proved  in the development in Section 2 and the 
start of Section 3 in \cite{BK2}
but seemingly assumed there.  

\begin{lemma} \label{l}   A right  $w^*$-rigged module over 
a dual operator algebra $M$ is a dual operator $M$-module.  
\end{lemma}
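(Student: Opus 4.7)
My plan is to verify in turn the three defining features of a dual operator $M$-module: that $Y$ is an operator $M$-module, that the action is nondegenerate, and that the action is separately weak$^*$ continuous. The single observation powering the whole argument is that $\{\phi_\alpha\}$ is \emph{jointly injective}, which is immediate from the norm formula $\Vert y \Vert = \sup_\alpha \Vert \phi_\alpha(y) \Vert$ recalled just above the lemma.

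For the operator module structure I apply $\phi_\alpha$ entrywise at matrix levels: $M$-linearity gives $\phi_\alpha([y_{ij}][a_{jk}]) = [\phi_\alpha(y_{ij})][a_{jk}]$, whose matrix norm is dominated by $\Vert [\phi_\alpha(y_{ij})] \Vert \cdot \Vert [a_{jk}] \Vert$ since $C_{n(\alpha)}(M)$ is an operator $M$-module. Taking the supremum over $\alpha$ gives complete contractivity of the action; nondegeneracy ($y \cdot 1_M = y$) then follows from $\phi_\alpha(y \cdot 1_M) = \phi_\alpha(y) \cdot 1_M = \phi_\alpha(y)$ and joint injectivity.

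The main step is separate weak$^*$-continuity. Fix $a \in M$; the map $R_a : Y \to Y$, $y \mapsto ya$, is bounded by the previous paragraph, so by the Krein--Smulian theorem it suffices to check its weak$^*$-continuity on bounded subsets. So take a bounded net $y_\beta \to y$ in the weak$^*$ topology; then $(y_\beta a)$ is bounded, hence sits in a weak$^*$-compact set by Banach--Alaoglu, and it is enough to show every weak$^*$-convergent subnet has limit $ya$. If $y_{\beta'} a \to z$ weak$^*$ along a subnet, then on the one hand $\phi_\alpha(y_{\beta'} a) \to \phi_\alpha(z)$ weak$^*$ by weak$^*$-continuity of $\phi_\alpha$; on the other, $M$-linearity gives $\phi_\alpha(y_{\beta'} a) = \phi_\alpha(y_{\beta'}) \cdot a$, which tends weak$^*$ to $\phi_\alpha(y) \cdot a = \phi_\alpha(ya)$, using weak$^*$-continuity of $\phi_\alpha$ together with weak$^*$-continuity of right multiplication by $a$ on $C_{n(\alpha)}(M)$ (the latter because $M$ is a dual operator algebra). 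Hence $\phi_\alpha(z) = \phi_\alpha(ya)$ for every $\alpha$, so joint injectivity forces $z = ya$. The symmetric argument, with $y \in Y$ fixed and a bounded net $a_\beta \to a$ weak$^*$ in $M$, using now the weak$^*$-continuity of $b \mapsto \phi_\alpha(y) \cdot b$ from $M$ to $C_{n(\alpha)}(M)$, handles weak$^*$-continuity in the second variable.

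I expect no real obstacle here. The one pitfall to flag is that one should \emph{not} try to deduce weak$^*$-continuity of $R_a$ directly from the relation $\psi_\alpha \circ \phi_\alpha \to \mathrm{id}$ pointwise in weak$^*$, because bounded pointwise weak$^*$-limits of weak$^*$-continuous functionals on a dual Banach space need not be weak$^*$-continuous in general. The combination of joint injectivity of $\{\phi_\alpha\}$ with Krein--Smulian bypasses this difficulty.
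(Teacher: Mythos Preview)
Your argument is correct and follows the same Krein--Smulian plus subnet strategy as the paper's proof, transferring the question to $C_{n(\alpha)}(M)$ via the maps $\phi_\alpha$. The only differences are cosmetic: the paper applies $\psi_\alpha$ after $\phi_\alpha$ and passes to the limit in $\alpha$ in place of your joint-injectivity conclusion, handles weak$^*$-continuity in the $Y$-variable by citing \cite[Corollary 4.7.7]{DBbook} rather than arguing directly, and does not separately verify complete contractivity or nondegeneracy of the action as you do.
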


\begin{proof}    Let $Y$ be the $w^*$-rigged module,  and let $\phi_\alpha, \psi_\alpha, n_\alpha$ be as in 
Definition \ref{wrig}.  We need to show that the module action 
$Y \times M \to Y$  is separately weak* continuous. 
Given a bounded net $m_t \to m$ weak* in $M$, and $y \in Y$, suppose that a subnet
$y m_{t_{\nu}} \to y'$ weak* in $Y$.   Then $\phi_\alpha (\psi_\alpha (y m_{t_{\nu}})) = \phi_\alpha (\psi_\alpha (y) m_{t_{\nu}})$ weak* converges both
to $\phi_\alpha (\psi_\alpha (y )m) = \phi_\alpha (\psi_\alpha (ym))$ and 
$\phi_\alpha (\psi_\alpha (y'))$ with $\nu$, for every $\alpha$.   Hence
$y' = ym$,  so that by topology $y m_t \to ym$ weak*.
So the map  $m \mapsto ym$ is  weak* continuous by the Krein-Smulian theorem.  
That each $m \in M$
acts weak* continuously on $Y$ follows from e.g.\ \cite[Corollary 4.7.7]{DBbook}.
\end{proof}

Every right $w^*$-rigged module $Y$ over $M$ gives rise to a canonical left $w^*$-rigged $M$-module
$\tilde{Y}$, and a pairing $(\cdot, \cdot) : \tilde{Y} \times Y \to M$ (see \cite{BK2}).
Indeed $\tilde{Y}$ turns out to be completely isometric to $CB^{\sigma}(Y,M)_{M}$ as dual operator 
$M$-modules, together with its canonical pairing with $Y$.   We also have $\widetilde{\tilde{Y}} = Y$.
The morphisms between $w^*$-rigged $M$-modules are the {\em adjointable} $M$-module maps;
these are the $M$-module maps $T : Y_1 \to Y_2$ for which there exists a
$S : \tilde{Y_2} \to \tilde{Y_1}$ with $(x,T(y)) = (S(x),y)$ for all $x \in \tilde{Y_2}, y \in Y_1$.
These turn out to coincide with the 
weak* continuous completely bounded  $M$-module maps (see \cite[Proposition 3.4]{BK2}).   
We often write $\mathbb{B}(Z,W)$ for the weak* continuous completely bounded  $M$-module maps
from a  $w^*$-rigged $M$-module $Z$ into a dual operator $M$-module $W$, with as
usual $\mathbb{B}(Z) = \mathbb{B}(Z,Z)$.

In  \cite{BK2}, Section 4 we gave several equivalent definitions of
$w^*$-rigged modules (an additional note that seems to be
needed to connect the definitions may be found mentioned in the proof of 
\cite[Theorem 2.3]{DBr}).  
    From some of these it is clear that every 
weak* Morita equivalence $N$-$M$-bimodule $Y$ in the following sense, is
a $w^*$-rigged right $M$-module and a $w^*$-rigged left $N$-module,
and its `dual module' $\tilde{Y}$ and pairing  $(\cdot, \cdot) : \tilde{Y} \times Y \to M$
may be taken to be the $X$ below, and pairing corresponding to the first $\cong$ below.

\begin{definition} \label{bk1} \label{w*Mor}  We say that two dual operator algebras $M$ and $N$ are {\em weak*
Morita equivalent}, if there exist a dual operator $M$-$N$-bimodule $X$, and a
dual operator $N$-$M$-bimodule $Y$, such that  $M \cong X \otimes^{\sigma h}_{N} Y$
as dual operator $M$-bimodules (that is, completely
isometrically, $w^{*}$-homeomorphically, and also as
$M$-bimodules), and similarly 
 $N \cong Y \otimes^{\sigma h}_{M} X$
as dual operator $N$-bimodules.
We call $(M, N , X, Y)$ a {\em weak* Morita context} in this case. 
\end{definition}

\section{Rigged and weak$^*$-rigged modules and their tensor product}

\subsection{Interior tensor product of weak* rigged modules}

Suppose that $Y$ is a right $w^*$-rigged module over a dual operator algebra $M$
and, that $Z$ is a right $w^*$-rigged module over 
a dual operator algebra $N$, and $\theta :M \to \mathbb{B}(Z)$
is a normal morphism. Because $Z$ is a left 
operator module $\mathbb{B}(Z)$-module
(see p.\ 349 in  \cite{BK2}), $Z$ becomes an essential
left dual operator module over $M$ under 
the action $m \cdot z = \theta(m) z$. In this case we say $Z$ is a right $M$-$N$-{\em correspondence}.
We form the normal module Haagerup tensor product  $Y \otimes^{\sigma h}_{M} Z$
which we also write as $Y \otimes_{\theta} Z$.  By 3.3 in \cite{BK2} this a right $w^*$-rigged module
over $N$. We call $Y \otimes_{\theta} Z$ the {\em interior tensor product} of $w^*$-rigged modules.
By 3.3 in \cite{BK2}  we have $\widetilde{Y \otimes_{\theta} Z} \cong \tilde{Z} \otimes_{\theta} \tilde{Y}$ with 
$N$-valued pairing
$$( w \otimes x, y \otimes z)_{N} = (w, \theta((x,y)_M)z)_N$$
of $\tilde{Z} \otimes_{\theta} \tilde{Y}$ with $Y \otimes^{\sigma h}_{M} Z$.

The $w^*$-rigged interior  tensor product is associative. That is, if $A$, $B$, and $C$ are dual
operator algebras, if $X$ is a right $w^*$-rigged module over $A$, $Y$ is a right $w^*$-rigged module over $B$,
 and $Z$ is a right $w^*$-rigged module over $C$, and if $ \theta :A \to \mathbb{B}(Y)$ and $ \rho :B \to \mathbb{B}(Z)$
 are normal morphisms, then $(X \otimes_{\theta} Y) \otimes_{\rho} Z \cong X \otimes_{\theta} (Y \otimes_{\rho} Z)$
completely isometrically and $w^*$-homeomorphically.  
This follows from the associativity of the normal module Haagerup tensor product (see Proposition 2.9 in \cite{BK1}).

Let $Y$ be a right $w^*$-rigged module over a dual operator algebra $M$.
If $N$ is a $W^*$-algebra and if $Z$ is a right $W^*$-module over $N$ and 
$\theta : M \to \mathbb{B}(Z)$ is a normal morphism,
 then as we said earlier, $Y \otimes_{\theta} Z$
is a right $w^*$-rigged module over $N$. 
Since $N$ is  a $W^*$-algebra it follows
from Theorem 2.5 in \cite{BK2} that $Y \otimes_{\theta} Z$ is a right $W^*$-module over $N$.
An important special case  of this is  the {\em $W^*$-dilation}.
If $Z = N$ is a  von Neumann algebra containing (a weak* homeomorphic
completely isometrically isomorphic copy of) $M$, with the same identity element,
and $\theta : M \to N$ is the inclusion, 
then $Y \otimes_{\theta} N$ is called the $W^*$-dilation of $Y$ (see \cite{UK1,BK1,BK2}).
The following is an application of the $W^*$-dilation to Morita equivalence.

\begin{theorem} \label{dimoq}  
Suppose that $Y$ is a right $w^*$-rigged module over a dual
operator algebra $M$. Suppose that $N$ is a von Neumann algebra containing $M$ as a weak* closed 
subalgebra (with the same identity), and that ${\mathcal R}$ is the weak* closed ideal in $N$ generated by the range of the pairing $\tilde{Y} \times Y
\to M$.    Let $\theta : M \to N$ be the inclusion, which also 
induces a map $\pi : M \to B_{\mathcal R}({\mathcal R}) \cong {\mathcal R}$ via the canonical left action. 
Then $Z = Y \otimes_{\theta} N = Y \otimes_{\pi} {\mathcal R}$ is a von Neumann algebraic Morita equivalence bimodule (in the sense of Rieffel {\rm \cite{Rief}}), 
implementing a von Neumann algebraic Morita equivalence between ${\mathcal R}$ 
and $\mathbb{B}(Z) =  Y \otimes_{\pi} {\mathcal R} \otimes_{\pi} \tilde{Y}$.
In particular, if $W^*(M)$ is  a von Neumann algebra generated by (a weak* homeomorphic
completely isometrically isomorphic copy of) $M$, and if the range of the pairing $\tilde{Y} \times Y
\to M$ is weak* dense in $M$, then $W^*(M)$
is  Morita equivalent (in the sense of Rieffel) to the $W^*$-algebra 
$\mathbb{B}(Y \otimes_{\theta} W^*(M)) = Y \otimes_{\theta} W^*(M) \otimes_{\theta} \tilde{Y}$. 
\end{theorem}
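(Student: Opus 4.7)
The plan is to reduce the claim to Rieffel's classical theorem \cite{Rief} that a right $W^*$-module $Z$ over a von Neumann algebra $A$ whose $A$-valued inner product has weak$^*$-dense range in $A$ is automatically a $W^*$-equivalence bimodule between $\mathbb{B}(Z)$ and $A$. Here $A$ will be ${\mathcal R}$ and $Z = Y \otimes_{\theta} N$; by the paragraph preceding the theorem, $Z$ is already a right $W^*$-module over $N$, so also over ${\mathcal R}$ once we have the identification $Y \otimes_{\theta} N = Y \otimes_{\pi} {\mathcal R}$.

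To establish that identification, I would use the nets $\phi_{\alpha} : Y \to C_{n(\alpha)}(M)$ and $\psi_{\alpha} : C_{n(\alpha)}(M) \to Y$ from Definition \ref{wrig}. Since $\psi_{\alpha}$ is a right $M$-module map, $\psi_{\alpha}([m_i]_i) = \sum_i \psi_{\alpha}(e_i)\, m_i$. The component maps $y \mapsto \phi_{\alpha}(y)_i$ are weak$^*$-continuous completely bounded right $M$-module maps $Y \to M$, hence under $\tilde{Y} \cong CB^{\sigma}(Y,M)_M$ correspond to elements $\tilde{e}_i^{\alpha} \in \tilde{Y}$, giving $\phi_{\alpha}(y)_i = (\tilde{e}_i^{\alpha}, y)_M$. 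Using $\psi_{\alpha}(\phi_{\alpha}(y)) \to y$ weak$^*$ and the separate weak$^*$ continuity of the module action, for any $y \in Y$ and $n \in N$,
$$y \otimes n \; = \; \lim_{\alpha} \sum_i \psi_{\alpha}(e_i) \otimes (\tilde{e}_i^{\alpha}, y)_M\, n$$
weak$^*$ in $Y \otimes_{\theta} N$. Since ${\mathcal R}$ is an ideal, each summand lies in the image of the natural map $Y \otimes_{\pi} {\mathcal R} \to Y \otimes_{\theta} N$, so this map has weak$^*$-dense range. A Krein--Smulian duality argument combined with the universal property of $\otimes^{\sigma h}_M$ promotes this to a weak$^*$-homeomorphic complete isometry, and the right $N$-action on $Y \otimes_{\theta} N$ restricts consistently to the right ${\mathcal R}$-action on $Y \otimes_{\pi} {\mathcal R}$ (since ${\mathcal R}$ is an ideal).

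Next I would verify fullness of the inner product on $Z$ over ${\mathcal R}$. From the pairing formula
$$(w \otimes x, y \otimes z)_N \; = \; w\, \theta((x,y)_M)\, z, \qquad w,z \in N,\ x \in \tilde{Y},\ y \in Y,$$
recalled just before the theorem, the range of the $N$-valued inner product on $Z$ weak$^*$-generates $N \cdot (\tilde{Y}, Y)_M \cdot N$, which is by definition the weak$^*$-closed ideal ${\mathcal R}$. Rieffel's theorem then yields that $Z$ is a $\mathbb{B}(Z)$-${\mathcal R}$ equivalence bimodule. To identify $\mathbb{B}(Z) = Y \otimes_{\pi} {\mathcal R} \otimes_{\pi} \tilde{Y}$, combine the dual-module formula $\widetilde{Y \otimes_{\theta} N} \cong N \otimes_{\theta} \tilde{Y}$ (restricted via the same argument to ${\mathcal R} \otimes_{\pi} \tilde{Y}$) with the standard $W^*$-module identity $\mathbb{B}(Z) \cong Z \otimes_{{\mathcal R}} \tilde{Z}$ for a full $W^*$-module.

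The main obstacle I anticipate is the first identification $Y \otimes_{\theta} N = Y \otimes_{\pi} {\mathcal R}$: one must check not only that the two tensor products have the same underlying space (which the density computation gives) but also that they agree as dual operator $N$-modules, so that Rieffel's $W^*$-module framework applies with ${\mathcal R}$ in place of $N$. For the ``in particular'' assertion, when $(\tilde{Y}, Y)_M$ is weak$^*$-dense in $M$ and $N = W^*(M)$, the weak$^*$-closed ideal ${\mathcal R}$ contains $M$, hence by weak$^*$-closure ${\mathcal R} = W^*(M)$, and the general statement specializes to the claimed Morita equivalence between $W^*(M)$ and $Y \otimes_{\theta} W^*(M) \otimes_{\theta} \tilde{Y}$.
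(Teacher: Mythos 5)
Your overall strategy coincides with the paper's: realize $Z = Y \otimes_{\theta} N$ as a right $W^*$-module whose $N$-valued inner product generates the weak* closed ideal ${\mathcal R}$, and then invoke Rieffel's theorem (8.5.14 in \cite{DBbook}). The fullness computation and the identification of $\mathbb{B}(Z)$ with $Y \otimes_{\pi} {\mathcal R} \otimes_{\pi} \tilde{Y}$ are also essentially as in the paper. There are, however, two places where your argument does not close.

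First, the identification $Y \otimes_{\theta} N = Y \otimes_{\pi} {\mathcal R}$. Your density computation shows that the canonical weak* continuous complete contraction $Y \otimes_{\pi} {\mathcal R} \to Y \otimes_{\theta} N$ has weak* dense range, but a weak* continuous complete contraction with weak* dense range between dual operator spaces need not be injective, let alone a weak* homeomorphic complete isometry; you flag this yourself as ``the main obstacle'' and then appeal to an unspecified ``Krein--Smulian duality argument,'' which is exactly the missing content. The paper closes this step by a different and complete route: by Lemma \ref{sdrel} (i.e.\ \cite[Theorem 3.5]{BK2}) one has $Y \otimes^{\sigma h}_M N \cong \mathbb{B}_M(\tilde{Y},N)$ and $Y \otimes^{\sigma h}_M {\mathcal R} \cong \mathbb{B}_M(\tilde{Y},{\mathcal R})$, and every weak* continuous left $M$-module map $f : \tilde{Y} \to N$ automatically takes values in ${\mathcal R}$, because elements of the form $(x,y)x'$ are weak* dense in $\tilde{Y}$ and $f((x,y)x') = (x,y)f(x') \in {\mathcal R}$, the latter being a weak* closed ideal containing the range of the pairing. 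Hence the two HOM spaces, and therefore the two tensor products, literally coincide as dual operator modules. Second, in the ``in particular'' assertion you deduce ${\mathcal R} = W^*(M)$ from ${\mathcal R} \supseteq M$ ``by weak* closure''; since $M$ is nonselfadjoint and $W^*(M)$ is generated by $M$ together with its adjoints, you also need the standard fact (used explicitly in the paper) that weak* closed ideals of von Neumann algebras are selfadjoint, whence ${\mathcal R}$ is a von Neumann subalgebra containing $M$ and so equals $W^*(M)$.
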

\begin{proof}    By the lines above the theorem, $Z = Y \otimes_{\theta} 
N$ is  a right $W^*$-module over $N$.      That $Y \otimes_{\theta} N = Y \otimes_{\pi} {\mathcal R}$
may be seen for example by \cite[Theorem 3.5]{BK2} (see Lemma \ref{sdrel} below), since 
any weak* continuous left $M$-module map $\tilde{Y} \to N$ necessarily takes values in the ideal ${\mathcal R}$
since terms of form $(x,y) x'$ are weak* dense in $\tilde{Y}$, for $x,x' \in \tilde{Y}, y \in Y$. 
By Corollary 8.5.5 in \cite{DBbook}, $\mathbb{B}(Z)$ is a $W^*$-algebra.  Consider the canonical pairing
$$\tilde{Z} \times Z \cong (N \otimes_{\theta} \tilde{Y}) \times (Y \otimes_{\theta} N) \to N.$$
The $w^*$-closure of the range of this pairing is a weak* closed two sided ideal in $N$.
It is easy to see that this ideal is ${\mathcal R}$, since the terms of the form $a (xy) b$
are contained in the range of the above pairing for all $(x,y) \in \tilde{Y} \times Y$
and $a, b \in N$.     By  \cite{Rief} (or 8.5.14 in \cite{DBbook}),
${\mathcal R}$ and  $\mathbb{B}(Z)$ are Morita equivalent in the sense
of Rieffel.  That $$\mathbb{B}(Z) = Y \otimes^{\sigma h}_M ({\mathcal R} \otimes^{\sigma h}_{\mathcal R} 
{\mathcal R})
 \otimes^{\sigma h}_M \tilde{Y} = 
  Y \otimes_{\theta} {\mathcal R}  \otimes_{\theta} \tilde{Y}$$
follows from the second paragraph on p.\ 357 of \cite{BK2}. 

Since $W^*(M)$ is a $W^*$-algebra generated by $M$,
and it is well known that the weak* closed ideals of $W^*$-algebras are selfadjoint, these together imply
that the weak* closure of the range of the above pairing is all of $W^*(M)$. By 8.5.14 in \cite{DBbook},
$W^*(M)$ and  $\mathbb{B}(Y \otimes_{\theta} W^*(M))$ are Morita equivalent in the sense
of Rieffel.     
\end{proof}

 \subsection{Functorial properties} \label{functo}

  Note that 
$M_{m,n}(\mathbb{B}(Y_1,Y_2))$ has two natural norms: the operator space one,  
coming from $CB(Y_1,M_{m,n}(Y_2))$, or the  
norm 
coming from $CB(C_n(Y_1),C_m(Y_2))$ via the identification
of a matrix $[f_{ij}] \in M_{m,n}(CB(Y_1,Y_2))$    with the map
$[y_j] \mapsto [\sum_j \, f_{ij}(y_j)]$.   
The next result asserts that these norms on $M_{m,n}(\mathbb{B}(Y_1,Y_2))$ are the same.  We will write this norm
as $\Vert [f_{ij}] \Vert_{cb}$.

\begin{lemma} \label{lll}   {\rm (\cite[Corollary 3.6]{BK2}}) \ Suppose that $Y_1$ and $Y_2$ are right $w^*$-rigged modules over 
a dual operator algebra $M$.    For each $m, n \in \mathbb{N}$ we have
$M_{m,n}(\mathbb{B}(Y,Z)) \cong
\mathbb{B}(C_n(Y),C_m(Z))$ completely isometrically.  
\end{lemma}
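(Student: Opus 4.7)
The plan is to realize the asserted complete isometry as a chain of standard operator-space mapping-space identifications, and then verify that the chain restricts correctly to the subspaces of weak*-continuous $M$-module maps. Writing $\mathbb{B}(Y_1,Y_2) = CB^\sigma(Y_1,Y_2)_M$, the operator space norm on $M_{m,n}(\mathbb{B}(Y_1,Y_2))$ is by definition the one carried over from the canonical complete isometry
$$M_{m,n}(CB(Y_1,Y_2)) \cong CB(Y_1, M_{m,n}(Y_2)),$$
while the target norm in the statement is just the cb norm of $C_n(Y_1) \to C_m(Y_2)$ induced by the matrix action $[y_j] \mapsto [\sum_j f_{ij}(y_j)]$.

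First I would concatenate three well-known complete isometries from operator space theory: $M_{m,n}(W) \cong C_m(R_n(W))$ (an identification of matrix levels), $CB(X, C_m(W)) \cong C_m(CB(X,W))$, and $CB(C_n(X), W) \cong R_n(CB(X,W))$ (the latter sending a row of maps $[g_1, \ldots, g_n]$ to the map $[x_j] \mapsto \sum_j g_j(x_j)$). These combine to give
$$M_{m,n}(CB(Y_1,Y_2)) \cong CB(Y_1, M_{m,n}(Y_2)) \cong C_m(R_n(CB(Y_1,Y_2))) \cong CB(C_n(Y_1), C_m(Y_2))$$
completely isometrically, and tracing a matrix $[f_{ij}]$ through the chain produces exactly the map $[y_j] \mapsto [\sum_j f_{ij}(y_j)]_i$ described in the lemma.

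Next I would verify that these isometries restrict to $CB^\sigma(\cdot,\cdot)_M$ on both sides. Module linearity is automatic: a row $[g_1,\ldots,g_n]$ of right $M$-module maps $Y_1 \to Y_2$ yields a right $M$-module map on $C_n(Y_1)$, and conversely every such map out of $C_n(Y_1)$ arises in this way via coordinate inclusions. For weak* continuity I would use that the coordinate inclusions $\iota_j : Y_1 \hookrightarrow C_n(Y_1)$ and the coordinate projections $\pi_i : C_m(Y_2) \to Y_2$ are adjointable and hence weak*-continuous, so a map $T : C_n(Y_1) \to C_m(Y_2)$ is weak*-continuous iff each entry $\pi_i T \iota_j$ is. The target is then exactly $\mathbb{B}(C_n(Y_1), C_m(Y_2))$, since $C_n(Y_1)$ is itself $w^*$-rigged over $M$ (take the nets $\phi_\alpha \oplus \cdots \oplus \phi_\alpha$, $\psi_\alpha \oplus \cdots \oplus \psi_\alpha$ from Definition \ref{wrig}) and $C_m(Y_2)$ is a dual operator $M$-module by Lemma \ref{l}. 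The only delicate step is the weak* continuity bookkeeping, but this is immediate from the weak*-continuity of the coordinate maps, so there is no substantive obstacle beyond routine verification.
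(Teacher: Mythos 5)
Your argument is correct, but note that the paper does not actually prove this lemma in situ: it is quoted from \cite[Corollary 3.6]{BK2}, where it is obtained from the representation $\mathbb{B}(Y_1,Y_2)_M \cong Y_2 \otimes^{\sigma h}_M \widetilde{Y_1}$ (restated here as Lemma \ref{sdrel}) together with the compatibility of $\otimes^{\sigma h}_M$ with matrix levels, $M_{m,n}(Y_2 \otimes^{\sigma h}_M \widetilde{Y_1}) \cong C_m(Y_2) \otimes^{\sigma h}_M R_n(\widetilde{Y_1})$, and the identification $\widetilde{C_n(Y_1)} \cong R_n(\widetilde{Y_1})$. Your route is genuinely different and more elementary: you bypass the dual module $\widetilde{Y}$ entirely and work directly with the standard mapping-space identities $M_{m,n}(CB(X,W)) \cong CB(X,M_{m,n}(W))$, $C_m(CB(X,W)) \cong CB(X,C_m(W))$, and $R_n(CB(X,W)) \cong CB(C_n(X),W)$ (the last most cleanly seen via $C_n(X) \cong C_n \otimes_h X$ and $CB(C_n \otimes_h X,W) \cong CB(C_n,CB(X,W))$), and then check that the chain restricts to $CB^{\sigma}(\cdot\,,\cdot)_M$ on both sides. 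That restriction step is handled correctly: the coordinate maps are module maps, $T = \sum_{i,j}\iota_i(\pi_i T\iota_j)\pi_j$ is a finite sum, so weak* continuity of $T$ is equivalent to weak* continuity of its entries. What your approach buys is independence from the tensor-product machinery of \cite[Section 3]{BK2}; the small price is that you must separately verify that $C_n(Y_1)$ is $w^*$-rigged with the canonical column operator space structure (so that the right-hand side really is the $\mathbb{B}$-space of the statement), which you do with the correct amplified nets, whereas the tensor-product route delivers the rigged structure, and the fact that the identification is also a weak* homeomorphism, essentially for free.
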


The interior tensor
product of $w^*$-rigged modules is functorial: 

\begin{proposition} \label{funct}
 Suppose that $Y_1$ and $Y_2$ are right $w^*$-rigged modules over 
a dual operator algebra $M$,
that $Z_1$ and $Z_2$ are right $M$-$N$-correspondences  
for a dual operator algebra 
$N$.   Write the left action
on $Z_k$ (abusively) as $\theta : M \to \mathbb{B}(Z_k)_N$.  
If $f = [f_{ij}] \in M_{m,n}(\mathbb{B}(Y_1,Y_2)_M)$, and
if $g = [g_{kl}] \in M_{p,q}(\mathbb{B}(Z_1,Z_2)_N)$  is a matrix of
adjointables which are also left $M$-module maps,
write $f \otimes g$ for  $[f_{ij} \otimes g_{kl}] \in M_{mp,nq}(\mathbb{B}(Y_1 \otimes_{\theta} Z_1,
Y_2 \otimes_{\theta} Z_2)$.  
 Then ${\lVert f \otimes g \rVert}_{cb} \leq {\lVert f \rVert}_{cb} {\lVert g \rVert}_{cb}$, where the `subscript cb' refers to the norm discussed
above the theorem.  Further, $\widetilde{f \otimes g} = \tilde{g} \otimes \tilde{f}$
for any $f \in \mathbb{B}(Y_1,Y_2)_M$ and $g \in \mathbb{B}(Z_1,Z_2)_N$.
\end{proposition}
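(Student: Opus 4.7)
The plan is to reduce the cb-norm bound to the universal property of $\otimes^{\sigma h}_M$ combined with Lemma \ref{lll}, and the adjoint identity to a direct pairing calculation on elementary tensors. I first handle the scalar case $m=n=p=q=1$. Given $f \in \mathbb{B}(Y_1,Y_2)_M$ and $g \in \mathbb{B}(Z_1,Z_2)_N$ with $g$ also left $M$-linear (as in the matrix hypothesis of the proposition), the bilinear map $(y,z) \mapsto f(y) \otimes g(z) : Y_1 \times Z_1 \to Y_2 \otimes_\theta Z_2$ is separately weak$^*$ continuous and completely bounded with bilinear cb-norm at most $\|f\|_{cb}\|g\|_{cb}$. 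It is $M$-balanced, since $f(y\cdot m) \otimes g(z) = f(y)\cdot m \otimes g(z) = f(y) \otimes m \cdot g(z) = f(y) \otimes g(m \cdot z)$ using that $f$ is right $M$-linear and $g$ is left $M$-linear. The universal property of the normal module Haagerup tensor product (from \cite{EP}) then linearizes this to a cb, weak$^*$ continuous right $N$-module map $f \otimes g : Y_1 \otimes_\theta Z_1 \to Y_2 \otimes_\theta Z_2$ with $\|f \otimes g\|_{cb} \le \|f\|_{cb}\|g\|_{cb}$.

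For the matrix case, Lemma \ref{lll} repackages $f = [f_{ij}]$ as a single cb, weak$^*$ continuous right $M$-module map $F : C_n(Y_1) \to C_m(Y_2)$ with $\|F\|_{cb} = \|f\|_{cb}$, and likewise $g = [g_{kl}]$ as a map $G : C_q(Z_1) \to C_p(Z_2)$ which inherits coordinate-wise left $M$-linearity and right $N$-linearity. The scalar case applied to $F$ and $G$ produces a map of cb-norm at most $\|F\|_{cb}\|G\|_{cb} = \|f\|_{cb}\|g\|_{cb}$. Using the natural identification $C_n(V) \otimes^{\sigma h}_M W \cong C_n(V \otimes^{\sigma h}_M W)$ for a right $M$-module $V$ and a left $M$-module $W$ (a direct consequence of associativity of $\otimes^{\sigma h}_M$ together with $C_n(V) = C_n \otimes^h V$ and the triviality of the $M$-action on $C_n$), applied twice, one obtains $C_n(Y_k) \otimes_\theta C_q(Z_k) \cong C_{nq}(Y_k \otimes_\theta Z_k)$. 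Under these identifications, $F \otimes G$ acts on the elementary tensor $[y_j]_j \otimes [z_l]_l \leftrightarrow [y_j \otimes z_l]_{(j,l)}$ to yield $[\sum_{j,l} f_{ij}(y_j) \otimes g_{kl}(z_l)]_{(i,k)}$, which is exactly the map associated via Lemma \ref{lll} to the Kronecker-indexed matrix $[f_{ij} \otimes g_{kl}]$. I anticipate this reindexing bookkeeping --- matching $F \otimes G$ with $[f_{ij} \otimes g_{kl}]$ and confirming that the column-identification above survives in the normal module Haagerup setting --- to be the main, though essentially routine, obstacle.

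For the adjoint identity, I invoke uniqueness: it suffices to check that $\tilde{g} \otimes \tilde{f}$ satisfies the defining pairing relation of the adjoint of $f \otimes g$, and this need only be verified on elementary tensors since these are weak$^*$-total and the pairings are separately weak$^*$ continuous. Using the pairing formula $(w \otimes x, y \otimes z)_N = (w, \theta((x,y)_M)z)_N$ recalled above, the defining relation $(x, f(y))_M = (\tilde{f}(x), y)_M$, and the left $M$-linearity of $g$, one computes
$$
(w \otimes x, (f \otimes g)(y \otimes z))_N = (w, \theta((x, f(y))_M)\,g(z))_N = (w, g(\theta((\tilde{f}(x), y)_M)\, z))_N,
$$
which by the definition of $\tilde{g}$ equals $(\tilde{g}(w), \theta((\tilde{f}(x), y)_M)\, z)_N = ((\tilde{g} \otimes \tilde{f})(w \otimes x), y \otimes z)_N$. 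Thus $\widetilde{f \otimes g}$ and $\tilde{g} \otimes \tilde{f}$ induce the same pairing, and uniqueness gives the claimed identity.
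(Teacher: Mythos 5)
Your scalar case and your adjoint computation are essentially the paper's: the paper gets existence, weak* continuity and the norm bound for $f \otimes g$ from the functoriality of $\otimes^{\sigma h}_M$ (Corollary 2.4 in \cite{BK1}, which is the packaged form of the universal-property argument you give), and then verifies $\widetilde{f \otimes g} = \tilde{g} \otimes \tilde{f}$ by the same pairing calculation on elementary tensors. Where you genuinely diverge is the matrix case. The paper factors $f \otimes g = (f \otimes I)(I \otimes g)$ and treats the two halves by different arguments: for $f \otimes I$ it uses the identification $C_n(Y_1) \otimes_\theta Z_1 \cong C_n(Y_1 \otimes_\theta Z_1)$ (which really is just associativity, since the column structure sits to the left of the balanced leg), while for $I \otimes g$ it avoids the corresponding right-hand identification altogether and instead runs a direct approximation using the structure maps $\phi_\alpha, \psi_\alpha$ of $Z$ together with \cite[Corollary 2.8]{BK1}. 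You instead apply the scalar case once to the column-packaged maps $F, G$ and then invoke $Y \otimes_\theta C_q(Z) \cong C_q(Y \otimes_\theta Z)$. That identification is true, but your stated justification --- ``associativity plus triviality of the $M$-action on $C_q$'' --- is not quite right: in $Y \otimes^{\sigma h}_M (C_q \otimes^h Z)$ the factor $C_q$ sits between $Y$ and $Z$, the Haagerup tensor product does not commute factors (e.g.\ $R_n \otimes^h C_n \not\cong C_n(R_n)$), and associativity does not apply since $C_q$ is not the balanced leg. What you actually need is the commutation of the interior tensor product with (finite column) direct sums in the right-hand variable, which is the property stated at the end of Section 3 of \cite{BK2} and invoked elsewhere in this paper; with that citation your argument closes, and is arguably more uniform than the paper's, at the cost of leaning on that extra structural fact. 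The paper's approximation argument for the $f = I$ half is precisely the hands-on substitute for it.
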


\begin{proof}  Suppose that
 $f \in \mathbb{B}(Y_1,Y_2)_M$ and $g \in \mathbb{B}(Z_1,Z_2)_N$.
Since $g :Z_1 \to Z_2$ is a left $M$-module map, $\tilde{g}$ is a right $M$-module map.
Thus we can define $f \otimes^{\sigma h}_{M} g : Y_1 \otimes^{\sigma h}_{M} Z_1 \to Y_2 \otimes^{\sigma h}_{M} Z_2$,
and $\tilde{g} \otimes^{\sigma h}_{M} \tilde{f} : \tilde{Z_2} \otimes^{\sigma h}_{M} \tilde{Y_2} \to
\tilde{Z_1} \otimes^{\sigma h}_{M} \tilde{Y_1}$. By Corollary 2.4 in \cite{BK1},
$f \otimes g : Y_1 \otimes_{\theta} Z_1 \to Y_2 \otimes_{\theta} Z_2$ is a completely bounded
weak* continuous right $N$-module map, with ${\lVert f \otimes g \rVert}_{cb} \leq {\lVert f \rVert}_{cb} {\lVert g \rVert}_{cb}$.
That $f \otimes g$ is adjointable follows from the fact that it is weak* continuous (by the remark  a couple of paragraphs 
above Definition \ref{w*Mor}).   Alternatively, 
for $y \in Y_1, z \in Z_1, w \in \tilde{Z_2}, x \in \tilde{Y_2}$ we have
\begin{eqnarray*}
(w \otimes x, f(y) \otimes g(z))_N  &=& (w,\theta((x,f(y))_M)g(z))_N \\
&=& (w\theta((\tilde{f}(x),y)_M),g(z))_N  \\
 &=& (\tilde{g}(w \theta((\tilde{f}(x),y)_M)),z)_N \\ 
&=& ( \tilde{g}(w), \theta((\tilde{f}(x),y)_M) z)_N \\
 &=&  (\tilde{g}(w) \otimes \tilde{f}(x), y \otimes z)_N,
\end{eqnarray*} which also yields  the last statement.  

Next, let $f = [f_{ij}], g = [g_{kl}]$ be as in the statement.
By a two step method we may assume that $f = I$ or $g = I$.
If $g = I$ the norm inequality we want
 follows from  the case in the last paragraph,
Lemma \ref{lll},  and because
$$M_{m,n}(\mathbb{B}(Y_1 \otimes_{\theta} Z_1,
Y_2 \otimes_{\theta} Z_2)) \cong 
\mathbb{B}(C_{n}(Y_1 \otimes_{\theta} Z_1) , C_{m}(Y_2 \otimes_{\theta} Z_2))$$
which may be viewed as  $\mathbb{B}(C_{n}(Y_1) \otimes_{\theta} Z_1 ,
C_{m}(Y_2) \otimes_{\theta} Z_2)$.   Thus $[f_{ij} \otimes 1]$ may be regarded as the map
$h \otimes I$ on $C_{n}(Y_1) \otimes_{\theta} Z_1$, 
where $h$ is the map $C_{n}(Y_1) \to C_{n}(Y_2)$ associated with 
$[f_{ij}]$ as in the discussion above Lemma \ref{lll}.  Thus 
$$\Vert  [f_{ij} \otimes 1] \Vert_{cb} = \Vert h \otimes I \Vert_{\mathbb{B}(C_{n}(Y_1) \otimes_{\theta} Z_1 ,
C_{m}(Y_2) \otimes_{\theta} Z_2)} \leq \Vert h \Vert_{cb} = \Vert f \Vert_{cb}.$$
 If $f = I$,
the norm inequality we want follows by a standard trick (which could also have been used to give an alternative 
proof of the 
previous computation).
If $y = [y_{ij}] \in M_p(Y)$ and $z = [z_{ij}] \in M_p(Z)$
then  $$\sum_r \, y_{ir} \otimes g_{kl}(z_{rj})
= \lim_\alpha \, \sum_{r,k} \, y_{ir} \otimes g_{kl}(z^\alpha_k) (w^\alpha_k, z_{rj})$$
where $\phi_\alpha(z) = [(w^\alpha_k, z)]$ and $\psi_\alpha([b_k])
= \sum_k \, z^\alpha_k b_k$ are the maps in Definition \ref{wrig}, but for $Z$ in place of $Y$.
It follows that the norm of $[\sum_r \, y_{ir} \otimes g_{kl}(z_{rj})]$
is dominated by $\sup_\alpha \, \Vert [g_{kl}] \Vert_{cb}  
\Vert y \Vert \Vert z \Vert$.   That 
$\Vert I \otimes g \Vert_{cb} \leq
\Vert g \Vert_{cb}$  follows easily from this  if
we use \cite[Corollary 2.8]{BK1}.      
  \end{proof}

{\bf Remark.}   A similar result holds for rigged modules over approximately unital  operator algebras.
These matricial versions of the functoriality of the tensor product will be used in \cite{DBr}.

\medskip

The $w^*$-rigged interior  tensor product is  {\em    projective}.   This is because of the following:

\begin{proposition} \label{ispr}   The  normal module Haagerup tensor product is  projective:
If $u : Y_1 \to Y_2$ is a weak* continuous $M$-module complete quotient map between right 
dual operator $M$-modules, and $v : Z_1  \to Y_2$ is a weak* continuous $M$-module complete quotient map between left
dual operator $M$-modules, then $u \otimes v : Y_1 \otimes_{M}^{\sigma h} Z_1 \to 
 Y_2 \otimes_{M}^{\sigma h} Z_2$ is a  weak* continuous complete quotient map.
\end{proposition}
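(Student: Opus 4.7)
The plan is to reduce the statement, via the Krein--Smulian theorem, to a pre-adjoint question, and then use the universal property of the normal module Haagerup tensor product to identify that pre-adjoint with a pullback of bilinear forms along $(u,v)$.

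Set $V_i = Y_i \otimes^{\sigma h}_M Z_i$. First I would observe, using functoriality of $\otimes^{\sigma h}_M$ from \cite{BK1}, that $u \otimes v : V_1 \to V_2$ is a well-defined weak$^*$-continuous completely contractive map. Since both sides are dual operator spaces, Krein--Smulian reduces showing that $u \otimes v$ is a complete quotient map to showing that its pre-adjoint $(u \otimes v)_*$ is a complete isometry on preduals.

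Next I would invoke the universal property from \cite{EP} (together with the matricial perspective in \cite[Section 2]{BK1}) to identify weak$^*$-continuous completely bounded maps $V_i \to M_n$ with $M$-balanced, separately weak$^*$-continuous, completely bounded bilinear maps $Y_i \times Z_i \to M_n$, completely isometrically at every matrix level. Under this identification, $(u \otimes v)_*$ is simply the pullback $\Psi \mapsto \Psi \circ (u \times v)$. Preservation of $M$-balance uses that $u$ and $v$ are $M$-module maps, and preservation of separate weak$^*$-continuity uses that they are weak$^*$-continuous.

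It remains to show that this pullback is a complete isometry. Complete contractivity of $u$ and $v$ gives one direction for free. For the other, the cb-norm of such a bilinear form is a supremum of operator-space norms of expressions evaluated at matrices in the open unit balls of $M_p(Y_i)$ and $M_q(Z_i)$; since each matricial amplification of $u$ and of $v$ is surjective onto the corresponding open unit ball (this is precisely the complete quotient hypothesis), the suprema computed on the $(Y_1,Z_1)$ side and on the $(Y_2,Z_2)$ side must coincide.

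The step I expect to require the most care is the matricial version of the predual/bilinear-form identification: one must verify that it is genuinely an operator-space isometry at every level and that separate weak$^*$-continuity and $M$-balance really do transfer cleanly under the pullback. These are more or less folklore consequences of \cite{EP} and \cite[Section 2]{BK1}, but should be pinned down before executing the supremum comparison above.
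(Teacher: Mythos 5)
Your argument is correct, but it proceeds by a genuinely different route from the paper's. The paper argues directly on elements: by \cite[Corollary 2.8]{BK1} any $z$ in the ball of $Y_2\otimes^{\sigma h}_M Z_2$ is the weak* limit of a net $z_t$ in the \emph{open} ball of the ordinary module Haagerup tensor product $Y_2\otimes_{hM}Z_2$; each $z_t$ is lifted through $u\otimes v$ using the known projectivity of $\otimes_{hM}$ (or of $\otimes_h$); and a weak* convergent subnet of the lifts, which exists by weak* compactness of the closed ball of $Y_1\otimes^{\sigma h}_M Z_1$, converges to a preimage of $z$ in that ball. The same is done at each matrix level. Your approach instead dualizes: you identify the operator space predual of $Y_i\otimes^{\sigma h}_M Z_i$ with the $M$-balanced, separately weak* continuous, completely bounded bilinear forms on $Y_i\times Z_i$ (with the multiplicative, Christensen--Sinclair type matrix norms), recognize $(u\otimes v)_*$ as the pullback along $(u,v)$, and check that pulling back along complete quotient maps preserves the cb-norm because the matricial amplifications of $u$ and $v$ surject onto open unit balls; Krein--Smulian duality then converts the complete isometry on preduals back into the complete quotient statement. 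Both arguments are sound. The paper's proof is shorter granted the density result \cite[Corollary 2.8]{BK1} and the projectivity of the norm-closed module Haagerup tensor product, and it yields directly that the image of the closed ball is the whole closed ball; yours avoids invoking projectivity of $\otimes_{hM}$ and is closer in spirit to the construction of $\otimes^{\sigma h}_M$ in \cite{EP}, at the price of having to pin down carefully (as you note) that the matricial predual/bilinear-form identification is a complete isometry and that balance and separate weak* continuity are preserved under the pullback --- all of which does go through.
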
   \begin{proof}     By functoriality of $\otimes_{M}^{\sigma h}$ (see \cite[Corollary 2.4]{BK1})
we have that $u \otimes v$ is a weak* continuous complete contraction.  
 If $z \in {\rm Ball}(Y_2 \otimes_{M}^{\sigma h} Z_2)$, then by  \cite[Corollary 2.8]{BK1}
$z$ is weak* approximable by a net $z_t \in 
{\rm Ball}(Y_2 \otimes_{hM} Z_2)$.   We may assume that $\Vert z_t \Vert < 1$ for all $t$.
By projectivity of $\otimes_{hM}$ (or of $\otimes_{h}$)
there exist $w_t \in {\rm Ball}(Y_1 \otimes_{hM} Z_1)$ with $(u \otimes v) (w_t) = z_t$.
Suppose that $w_{t_\nu} \to w \in {\rm Ball}(Y_1 \otimes_{M}^{\sigma h} Z_1)$, then
clearly $(u \otimes v) (w) = z$.   So $u \otimes v$ is a quotient map.
A similar argument works at the matrix levels using \cite[Corollary 2.8]{BK1}.
\end{proof}

Unlike the module Haagerup tensor product over a $C^*$-algebra  (see \cite[Theorem 3.6.5 (2)]{DBbook}), the  normal module Haagerup tensor product $\otimes_{M}^{\sigma h}$ need not be `injective'
for general dual operator modules if $M$ is a $W^*$-algebra.    However
by functoriality it is easy to see that we will have such injectivity for this tensor product for  $w^*$-orthogonally 
complemented submodules (see  the last section of \cite{DBr}) of 
$w^*$-rigged modules, even if $M$ is a dual operator algebra.    For example if 
$Y$ is a  $w^*$-orthogonally  
complemented submodule of a $w^*$-rigged module $W$ over $M$, and if $i$ and $P$ are the associated inclusion 
and projection maps, and if $Z$ is a  right $M$-$N$-correspondence, then  $P \otimes I_Z$ and $i  \otimes I_Z$
are completely contractive adjointable maps composing to the identity on $Y \otimes_\theta Z$.   So 
$Y \otimes_\theta Z$ is weak* homeomorphically completely isometrically $M$-module isomorphic to a $w^*$-orthogonally 
complemented submodule of $W \otimes_\theta Z$.   A similar statement may be made 
for an appropriately complemented $M$-$N$-`subcorrespondence' of $Z$.

This injectivity of the weak* interior tensor product 
will work for any weak* closed submodules of $w^*$-rigged modules
 over a $W^*$-algebra (that is, $W^*$-modules), because such
are automatically $w^*$-orthogonally
complemented \cite{Bsd}.     
It follows that the weak* interior tensor with a right correspondence
is `exact' on the category of $W^*$-modules.
Thus given an exact sequence of $W^*$-modules over a  $W^*$-algebra $M$, 
$$0 \; \longrightarrow \; D  \longrightarrow \; E
 \longrightarrow \; F  \longrightarrow \; 0$$
with the first of these adjointable morphisms  completely isometric and  the second 
a complete quotient map, and given a right $M$-$N$-correspondence $Z$, we get 
an exact sequence 
$$0 \; \longrightarrow \; D  \otimes_\theta Z \; \longrightarrow \; E \otimes_\theta Z
 \longrightarrow \; F  \otimes_\theta Z \longrightarrow \; 0$$
of the same kind.   Indeed this all follows from
 the `commutation with direct sums'
property at the end of \cite[Section 3]{BK2}.   It might be interesting to investigate such exactness
of the interior tensor with a correspondence on the category of $w^*$-rigged modules.  


\subsection{HOM-tensor relations}    (See \cite[Theorem 3.6]{Bsd} for the self-adjoint variant of these.)  In our context, the  HOM spaces will be the spaces $\mathbb{B}( - , - )$ of all weak* continuous completely bounded  module maps.

\begin{lemma} \label{sdrel}  If $Y$ is a right $w^*$-rigged module over $M$,  and $Z$ is a left (resp.\ right) dual operator module over $M$, then $Y \otimes^{\sigma h}_M Z \cong \mathbb{B}_M(\tilde{Y},Z)$
(resp.\  $Z  \otimes^{\sigma h}_M \tilde{Y} \cong \mathbb{B}(Y,Z)_M$) 
completely isometrically and weak* homeomorphically.
\end{lemma}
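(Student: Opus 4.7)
The plan is to construct an explicit completely isometric weak$^*$-homeomorphic isomorphism $\Phi : Y \otimes^{\sigma h}_M Z \to \mathbb{B}_M(\tilde{Y}, Z)$, determined on elementary tensors by $\Phi(y \otimes z)(x) = (x,y) \cdot z$, together with an approximate inverse built from the $w^*$-rigged structure of $Y$. The map $(x,y,z) \mapsto (x,y)\,z$ on $\tilde{Y} \times Y \times Z$ is separately weak$^*$ continuous, completely contractive, and $M$-balanced in $(y,z)$, so two applications of the universal property of $\otimes^{\sigma h}_M$ produce a weak$^*$ continuous complete contraction $\Phi$; the identity $(mx, y) = m(x,y)$ shows its image lies in $\mathbb{B}_M(\tilde{Y}, Z)$.

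For the inverse, write the maps of Definition \ref{wrig} in coordinates as $\phi_\alpha = [\phi_\alpha^i]_{i=1}^{n(\alpha)} \in C_{n(\alpha)}(\tilde{Y})$ and $\psi_\alpha \leftrightarrow [y_i^\alpha]_{i=1}^{n(\alpha)} \in R_{n(\alpha)}(Y)$, with $y_i^\alpha = \psi_\alpha(e_i)$. For $w \in \mathbb{B}_M(\tilde{Y}, Z)$ set
$$u^w_\alpha \,:=\, \sum_{i=1}^{n(\alpha)} y_i^\alpha \otimes w(\phi_\alpha^i) \;\in\; Y \otimes^{\sigma h}_M Z.$$
Viewing $u^w_\alpha$ as the Haagerup product of the row $[y_i^\alpha]$ (of norm $\leq \|\psi_\alpha\|_{cb} \leq 1$) with the column $w^{(n(\alpha))}([\phi_\alpha^i])$ (of norm $\leq \|w\|_{cb}\, \|\phi_\alpha\|_{cb} \leq \|w\|_{cb}$) gives $\|u^w_\alpha\| \leq \|w\|_{cb}$. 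Using left $M$-linearity of $w$, one computes
$$\Phi(u^w_\alpha)(x) \,=\, \sum_i (x, y_i^\alpha)\, w(\phi_\alpha^i) \,=\, w\!\left(\sum_i (x, y_i^\alpha)\, \phi_\alpha^i\right) \,=\, w\bigl(\tilde\psi_\alpha(\tilde\phi_\alpha(x))\bigr),$$
which converges weak$^*$ to $w(x)$ since the dual maps satisfy $\tilde\psi_\alpha \circ \tilde\phi_\alpha \to I_{\tilde{Y}}$ in the point-weak$^*$ topology (here $\tilde\phi_\alpha(x) = [(x,y_i^\alpha)]_i$ and $\tilde\psi_\alpha([m_i]) = \sum_i m_i \phi_\alpha^i$).

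For surjectivity, select a weak$^*$ cluster point $\Psi(w)$ of $(u^w_\alpha)$ (existing by Banach--Alaoglu and the norm bound); then $\Phi(\Psi(w)) = w$ by the previous display and weak$^*$ continuity of $\Phi$. For the isometric estimate, observe that on an elementary tensor $u = y \otimes z$ with $w = \Phi(u)$ one has $w(\phi_\alpha^i) = \phi_\alpha^i(y) \cdot z$, so
$$u^w_\alpha = \psi_\alpha(\phi_\alpha(y)) \otimes z \;\longrightarrow\; y \otimes z = u$$
weak$^*$; this extends to all of $Y \otimes^{\sigma h}_M Z$ by linearity and by the weak$^*$ density of finite sums of elementary tensors (see \cite[Corollary 2.8]{BK1}) together with weak$^*$ continuity of the relevant maps. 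Combined with $\|u^{\Phi(u)}_\alpha\| \leq \|\Phi(u)\|_{cb}$ this gives $\|u\| \leq \|\Phi(u)\|_{cb}$, so $\Phi$ is an isometric bijection.

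The matricial version of the norm argument is analogous, using the identification $M_n(\mathbb{B}_M(\tilde{Y}, Z)) \cong \mathbb{B}_M(\tilde{Y}, M_n(Z))$ together with Lemma \ref{lll}, so $\Phi$ is a complete isometry; weak$^*$ homeomorphism then follows from Krein--Smulian. The parenthetical right-module statement follows symmetrically, or by applying the result already proved with $\tilde{Y}$ in place of $Y$ and recalling $\widetilde{\tilde{Y}} = Y$. The main technical hurdle will be carefully handling the matrix-level norm bound and the weak$^*$-density extension in the isometry step; all the other pieces reduce to a clean computation against the pairing $(\cdot,\cdot)$.
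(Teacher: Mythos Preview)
Your approach is a direct, self-contained construction, whereas the paper's proof is a two-line citation: the ``resp.''\ case \emph{is} \cite[Theorem~3.5]{BK2}, and the main case follows by applying the left-handed variant of that theorem to $\tilde{Y}$ and using $\widetilde{\tilde{Y}} = Y$. What you have outlined is in effect a reproof of \cite[Theorem~3.5]{BK2}; the underlying mathematics is the same, and your route has the merit of making the isomorphism and its inverse explicit, at the cost of length.

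There is, however, a genuine gap in your isometry step. You establish $u^{\Phi(u)}_\alpha \to u$ weak$^*$ for elementary tensors (and hence finite sums), and then assert this ``extends to all of $Y \otimes^{\sigma h}_M Z$ by \ldots\ weak$^*$ density \ldots\ together with weak$^*$ continuity of the relevant maps.'' That inference is not valid as stated: you have a \emph{net} of maps $T_\alpha(u) := u^{\Phi(u)}_\alpha$, and the combination of (i) weak$^*$ continuity of each $T_\alpha$ with (ii) pointwise weak$^*$ convergence $T_\alpha(d) \to d$ on a weak$^*$-dense set does \emph{not} in general yield $T_\alpha(u) \to u$ for all $u$---there is no $3\varepsilon$ argument available when the density is only weak$^*$. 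The correct fix is to observe that $T_\alpha$ coincides with $(\psi_\alpha \circ \phi_\alpha) \otimes I_Z$ (they agree on elementary tensors and are both weak$^*$ continuous, hence agree everywhere), and then prove directly that $((\psi_\alpha \phi_\alpha) \otimes I_Z)(u) \to u$ weak$^*$ for every $u$, using the explicit description of elements of $Y \otimes^{\sigma h}_M Z$ from \cite[Corollary~2.8]{BK1}. That computation is exactly the substance of the proof of \cite[Theorem~3.5]{BK2}. You rightly flagged the density extension as the main hurdle; it is the heart of the matter and cannot be handled by the soft argument you sketched.

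A smaller point: to apply the universal property of $\otimes^{\sigma h}_M$ with target $\mathbb{B}_M(\tilde{Y}, Z)$ you implicitly need that space to be a dual operator space with the point-weak$^*$ topology agreeing with its weak$^*$ topology on bounded sets; this is standard but worth a sentence.
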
   \begin{proof}   The respectively case is  \cite[Theorem 3.5]{BK2}.  
By the `other-handed variant' of  \cite[Theorem 3.5]{BK2} we have $CB^\sigma_M(\tilde{Y},Z)
\cong \widetilde{\tilde{Y}} \otimes^{\sigma h}_M Z = Y \otimes^{\sigma h}_M Z$.   
\end{proof}

\begin{theorem}
Let $M$ and $N$ be dual operator algebras. We have the following completely isometric identifications:
\begin{enumerate}

\item  $\mathbb{B}(Y \otimes_{\theta} Z, W)_{N}  \cong   \mathbb{B} ( Y, \mathbb{B}(Z,W)_{N} )_{M}$,  where $Y$ is a right $w^*$-rigged module over $M$, 
$Z$ is a right  $M$-$N$ correspondence,  and $W$ is a right dual operator module over $N$.

\item  $\mathbb{B}(Y, (Z \otimes^{\sigma h}_{N} W))_{M} \cong Z \otimes^{\sigma h}_{N} \mathbb{B} (Y, W) _{M} $, where $Y$ is a right $w^*$-rigged module over $M$, $W$ is a dual operator $N$-$M$-bimodule and $Z$ is a right dual operator $N$-module.

\item  $\mathbb{B}_N (\mathbb{B}(Y, W)_{M}, X) \cong Y \otimes^{\sigma h}_{M} \mathbb{B}_N(W, X)$, where $Y$ is a right $w^*$-rigged module over $M$, $X$ is a dual left $N$-operator module, and $W$ is a left  $N$-$M$-correspondence.

\item $ \mathbb{B}_M (X, \mathbb{B} (Z, W)_{N}) \cong \mathbb{B}(Z, \mathbb{B}_M(X, W))_{N}$ where $X$, $Z$ are left and right $w^*$-rigged modules over $M$ and $N$ respectively, and $W$ is a dual  operator 
$M$-$N$-bimodule. \end{enumerate}
\end{theorem}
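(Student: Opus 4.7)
The plan is to reduce all four identifications to a single principle: use Lemma \ref{sdrel} and its left-handed analogue to trade every $\mathbb{B}(-,-)$-space for a normal module Haagerup tensor product against a $w^*$-rigged dual, then invoke associativity of $\otimes^{\sigma h}$ (Proposition 2.9 in \cite{BK1}) to identify the two sides. In each case both sides will become the same iterated tensor product, and the isomorphisms are automatically completely isometric and $w^*$-homeomorphic since both Lemma \ref{sdrel} and associativity are.

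For part (1), rewriting $\mathbb{B}(Z,W)_N \cong W \otimes^{\sigma h}_N \tilde{Z}$ turns the right-hand side into $(W \otimes^{\sigma h}_N \tilde{Z}) \otimes^{\sigma h}_M \tilde{Y}$; the left-hand side, via $\widetilde{Y \otimes_{\theta} Z} \cong \tilde{Z} \otimes_{\theta} \tilde{Y}$ (from Section 2.1) and Lemma \ref{sdrel}, is the same. Part (2) is analogous: both sides reduce to $(Z \otimes^{\sigma h}_N W) \otimes^{\sigma h}_M \tilde{Y}$. Part (3) uses the same device combined with the currying property of $\otimes^{\sigma h}_M$: writing $\mathbb{B}(Y,W)_M \cong W \otimes^{\sigma h}_M \tilde{Y}$ and using the universal property of $\otimes^{\sigma h}_M$ identifies the left-hand side with $\mathbb{B}_M(\tilde{Y}, \mathbb{B}_N(W,X))$, where $\mathbb{B}_N(W,X)$ is made into a left $M$-module via $(m \cdot S)(w) = S(wm)$. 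The left-handed form of Lemma \ref{sdrel}, applied to the left $w^*$-rigged module $\tilde{Y}$ (whose $w^*$-rigged dual is $Y$), then converts this to $Y \otimes^{\sigma h}_M \mathbb{B}_N(W,X)$.

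For part (4), which is the most ``two-sided'' of the four, I carry out the same program symmetrically. On the left of (4), I apply Lemma \ref{sdrel} once to write $\mathbb{B}(Z,W)_N \cong W \otimes^{\sigma h}_N \tilde{Z}$, and then once more (the left-handed form applied to the left $w^*$-rigged module $X$, using $\widetilde{\tilde{X}} = X$) to convert the outer $\mathbb{B}_M(X, -)$ into $\tilde{X} \otimes^{\sigma h}_M (-)$; this turns the left-hand side into $\tilde{X} \otimes^{\sigma h}_M W \otimes^{\sigma h}_N \tilde{Z}$. On the right of (4), the inner $\mathbb{B}_M(X,W)$ becomes $\tilde{X} \otimes^{\sigma h}_M W$ and the outer $\mathbb{B}(Z,-)_N$ becomes $(-) \otimes^{\sigma h}_N \tilde{Z}$, giving the same triple tensor. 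Associativity of $\otimes^{\sigma h}$ finishes the argument.

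The main obstacle is verifying naturality of these isomorphisms in the ``residual'' bimodule actions that do not appear in Lemma \ref{sdrel} itself. For instance, in part (4) one needs $\mathbb{B}(Z,W)_N \cong W \otimes^{\sigma h}_N \tilde{Z}$ to be an isomorphism of left $M$-modules, and $\mathbb{B}_M(X,W) \cong \tilde{X} \otimes^{\sigma h}_M W$ to be an isomorphism of right $N$-modules, so that the composed identifications really do match. These checks are routine because the isomorphism in Lemma \ref{sdrel} is implemented by the canonical rigged-module pairing and is therefore equivariant in every ``extra'' action by construction; a similar observation handles the currying step in part (3).
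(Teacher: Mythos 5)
Your proposal is correct and follows essentially the same route as the paper: convert each $\mathbb{B}(-,-)$ into a tensor product against the rigged-module dual via Lemma \ref{sdrel}, use $\widetilde{Y \otimes_{\theta} Z} \cong \tilde{Z} \otimes_{\theta} \tilde{Y}$ and $\widetilde{\tilde{Y}} = Y$, and finish with associativity of the normal module Haagerup tensor product. The only local deviation is in part (3), where the paper avoids your currying step by instead dualizing once more, using $\widetilde{W \otimes^{\sigma h}_{M} \tilde{Y}} \cong Y \otimes^{\sigma h}_{M} \tilde{W}$ and a second application of Lemma \ref{sdrel}.
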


\begin{proof}  
The proofs all follow from Lemma \ref{sdrel}, and  Corollary 3.3 in \cite {BK2}, and the associativity of the normal module Haagerup tensor product, and the fact that $\widetilde{\tilde{Y}} = Y$ for  $w^*$-rigged modules.   Since the proofs are all similar we just prove a couple of them.  For (1) note that 
   $$\mathbb{B}_N(Y \otimes_{\theta} Z, W)  \cong   W \otimes^{\sigma h}_{N}  \widetilde{Y \otimes_{\theta} Z}  \cong  W \otimes^{\sigma h}_{N} ( \tilde{Z} \otimes ^{\sigma h}_{M} \tilde{Y})   \cong      (W \otimes^{\sigma h}_{N}  \tilde{Z} ) \otimes ^{\sigma h}_{M} \tilde{Y}  $$
which is isomorphic to $\mathbb{B} (Z, W)_{N}  \otimes ^{\sigma h}_{M} \tilde{Y}
\cong  \mathbb{B} ( Y, \mathbb{B}(Z,W)_{N} )_{M}$.
For (3), 
$$\mathbb{B}_N (\mathbb{B}(Y, W)_{M}, X) \cong  \mathbb{B}_N (W \otimes^{\sigma h}_{M} \tilde{Y}, X) \cong \widetilde{(W \otimes^{\sigma h}_{M}  \tilde{Y})} \otimes^{\sigma h} _{N} X  \cong  (Y \otimes^{\sigma h}_{M} \tilde{W} ) \otimes^{\sigma h}_{N} X$$
which is isomorphic to $Y \otimes^{\sigma h}_{M} ( \tilde{W} \otimes^{\sigma h}_{N} X)    \cong Y \otimes^{\sigma h}_{M} \mathbb{B}_N(W, X)$.  
\end{proof}

\section{The Picard group}   
 
We now discuss the Picard group of a dual operator algebra, following the route in \cite{BJ}. 
Throughout this section $A$ will be a dual operator algebra.  We define the {\em weak Picard group} of $A$, denoted by Pic$_w(A)$,
to be the collection of all $A$-$A$-bimodules implementing
a weak* Morita equivalence of $A$ with itself, with two such bimodules
identified if they are completely isometrically isomorphic
and weak* homeomorphic via an $A$-$A$-bimodule map.  The multiplication on
Pic$_w(A)$ is given by
the module normal Haagerup tensor product $\otimes^{\sigma h}_{A}$.

Any weak*  continuous completely isometric 
automorphism $\theta$ of $A$ defines a weak* Morita
equivalence $A$-$A$-bimodule $A_\theta$ by `change of rings' on the right.   This is just $A$ with the usual left module action, and with
right module action $x \cdot a$ = $x \theta(a)$.

\begin{lemma} \label{t}
The bimodule $A_{\theta}$ above is a weak* Morita equivalence bimodule for $A$,
 with `inverse bimodule' $A_{\theta^{-1}}$.
\end{lemma}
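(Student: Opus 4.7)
The plan is to realise the Morita data of Definition \ref{w*Mor} with $M = N = A$, $X = A_\theta$, $Y = A_{\theta^{-1}}$, by writing down an explicit $A$-bimodule isomorphism
$$\phi : A_\theta \otimes^{\sigma h}_A A_{\theta^{-1}} \;\longrightarrow\; A, \qquad a \otimes b \;\longmapsto\; a\,\theta(b),$$
together with its symmetric counterpart with $\theta$ and $\theta^{-1}$ swapped.

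First I would record that $A_\theta$ is a dual operator $A$-bimodule. As an operator space it coincides with $A$, hence is a dual operator space; left multiplication is separately weak* continuous; the twisted right action $x\cdot a = x\theta(a)$ is separately weak* continuous because $\theta$ is weak* continuous and multiplication in $A$ is separately weak* continuous; and bimodule associativity is immediate from associativity in $A$. The same applies to $A_{\theta^{-1}}$. Next I would build $\phi$ via the universal property of $\otimes^{\sigma h}_A$: the bilinear map $(a,b) \mapsto a\theta(b)$ is completely contractive (since $\theta$ is completely isometric and $A$ is an operator algebra), separately weak* continuous, and $A$-balanced via $(a\cdot c)\theta(b) = a\theta(c)\theta(b) = a\theta(cb) = a\theta(c\cdot b)$. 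A short check then shows $\phi$ intertwines the standard $A$-bimodule structure on $A$ with the structure on the tensor product coming from the ordinary left action on $A_\theta$ and the twisted right action on $A_{\theta^{-1}}$.

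For the inverse I would produce $\psi : A \to A_\theta \otimes^{\sigma h}_A A_{\theta^{-1}}$ by $\psi(a) = a \otimes 1$ and verify complete contractivity at each matrix level: writing $[a_{ij} \otimes 1]$ as the Haagerup matrix product of $[a_{ij}] \in M_n(A_\theta) = M_n(A)$ with the identity matrix $I_n \in M_n(A_{\theta^{-1}})$ gives $\Vert [a_{ij} \otimes 1] \Vert \leq \Vert [a_{ij}] \Vert$ in $\otimes_{hA}$, and the same bound transfers to $\otimes^{\sigma h}_A$ via \cite[Corollary 2.8]{BK1}. That $\phi\psi = \mathrm{id}_A$ is immediate from $\phi(a \otimes 1) = a\theta(1) = a$; for the other composition, specialising the balancing relation $a\theta(c) \otimes b = a \otimes cb$ to $b = 1$ yields $a\theta(b) \otimes 1 = a \otimes b$, so $\psi\phi$ fixes elementary tensors and, by weak* density of their span, the entire tensor product. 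Weak* continuity of $\psi$ then follows from Krein--Smulian since $\phi$ is a weak* continuous bijection and $\psi$ is bounded. The second isomorphism $A_{\theta^{-1}} \otimes^{\sigma h}_A A_\theta \cong A$ is obtained by the identical argument with $\theta$ and $\theta^{-1}$ interchanged.

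The main obstacle I expect is the complete contractivity of $\psi$: the forward map $\phi$ falls out of the universal property essentially for free, but the inverse requires the matrix-norm factorisation in the Haagerup tensor product followed by the passage to its normal version. Everything else is routine bookkeeping with the balancing relation and the weak* continuity of $\theta$.
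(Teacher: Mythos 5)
Your proof is essentially correct, but it follows a different route from the one the paper actually writes out. The paper does not construct the isomorphism $A_\theta \otimes^{\sigma h}_A A_{\theta^{-1}} \cong A$ directly from Definition \ref{w*Mor}; instead it verifies the hypotheses of the recognition theorem for weak* Morita contexts (Theorem 3.3 in \cite{BK1}): it exhibits the two pairings $(a,a') \mapsto a\theta(a')$ and $[a,a'] \mapsto a\theta^{-1}(a')$, checks that they are separately weak* continuous, completely contractive, balanced bimodule maps, and verifies the associativity identities $(x,y)x' = x[y,x']$ and $y'(x,y)=[y',x]y$ together with the final (nondegeneracy) condition of that theorem. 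That route outsources to \cite{BK1} exactly the work you do by hand, namely showing that the induced map on the tensor product is a completely isometric weak* homeomorphic bimodule isomorphism. Your route is the one the paper alludes to in its opening sentence (``this follows using Definition \ref{bk1} and Lemma \ref{lebe}''), and it is closer in spirit to the proof of the later lemma identifying $X_\theta$ with $X \otimes^{\sigma h}_A A_\theta$: you get a completely contractive map $\phi$ from the universal property and a completely contractive right inverse $\psi(a)=a\otimes 1$, whence a surjective complete isometry. What the paper's approach buys is that one never has to touch the internal structure of $\otimes^{\sigma h}_A$; what yours buys is independence from Theorem 3.3 of \cite{BK1} and an explicit description of the isomorphisms.

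One point in your write-up needs reordering. You establish $\psi\phi=\mathrm{id}$ on the span of elementary tensors and extend ``by weak* density,'' but that extension requires $\psi\phi$ (hence $\psi$) to be weak* continuous, while you derive the weak* continuity of $\psi$ afterwards from the bijectivity of $\phi$ --- which is what the density argument was supposed to deliver. This is circular as stated. The fix is one line: $\psi(a) = a\cdot(1\otimes 1)$, and the left $A$-action on the dual operator bimodule $A_\theta\otimes^{\sigma h}_A A_{\theta^{-1}}$ is separately weak* continuous, so $\psi$ is weak* continuous outright; then $\psi\phi$ is weak* continuous and agrees with the identity on a weak* dense subspace, giving $\psi\phi=\mathrm{id}$, injectivity and surjectivity of $\phi$, and the complete isometry from the two contractive compositions. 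With that repair your argument is complete.
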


\begin{proof}  This follows using Definition \ref{bk1} and Lemma \ref{lebe} below, but we will give 
a more explicit proof.  
 If $A$ is a dual operator algebra, then
we prove that $(A, A, A_{\theta}, A_{\theta^{-1}})$ is a weak* Morita context
using Theorem 3.3 in \cite{BK1}.  Define a pairing
$( \cdot ) : A_{\theta} \times A_{\theta^{-1}} \to A$ taking
$(a, a') \mapsto a\theta(a')$.
It is easy to check that $(\cdot )$ is a separately
$w^*$-continuous completely contractive $A$-bimodule map
which is balanced over $A$.
Similarly, define another pairing
$[ \cdot ] : A_{\theta^{-1}} \times A_{\theta} \to A$ taking
$[a, a'] \mapsto a \theta^{-1}(a')$.
Again it is easy to check that
$[ \cdot ]$ is a separately $w^*$-continuous completely
contractive $A$-module map which is 
balanced over $A$.   It is simple algebra to check that  
$(x,y)x' = x [y,x']$ and $y'(x, y) = [y', x] y$.
Checking the last assertion of Theorem 3.3 in \cite{BK1} is also obvious.
 Hence $(A, A, A_{\theta}, A_{\theta^{-1}})$
is a weak* Morita context.
\end{proof}

Let Aut$(A)$ denote the group of weak* continuous
completely isometric automorphisms of $A$.
For $\alpha$, $\beta \in $Aut$(A)$
let $_{\alpha} A _{\beta}$ denote 
$A$ viewed as an $A$-$A$-bimodule with the left action $a \cdot x = \alpha(a) x$
 and right action $x \cdot b = x \beta(b)$.

\begin{lemma} \label{u}
If $\alpha, \beta, \gamma \in $Aut$(A)$ then,
$_{\alpha}A_{\beta}$  $\cong$ $ _{\gamma \alpha} A_{\gamma \beta}$ 
completely $A$-$A$-isometrically.
\end{lemma}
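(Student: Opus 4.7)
The plan is to exhibit the isomorphism explicitly: take the map $\gamma$ itself, viewed now as a map $_{\alpha}A_{\beta} \to {}_{\gamma\alpha}A_{\gamma\beta}$. Since $\gamma \in \mathrm{Aut}(A)$ is by hypothesis a weak* continuous completely isometric automorphism of $A$, this map is automatically a weak* homeomorphic complete isometry of the underlying dual operator spaces. So the only thing to verify is that it intertwines the bimodule actions on the two sides.

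For the left action, on $_{\alpha}A_{\beta}$ the action is $a \cdot x = \alpha(a)x$, while on $_{\gamma\alpha}A_{\gamma\beta}$ it is $a \cdot y = (\gamma\alpha)(a)\, y$. Since $\gamma$ is a homomorphism,
\[
\gamma(a \cdot x) \;=\; \gamma(\alpha(a)x) \;=\; \gamma(\alpha(a))\,\gamma(x) \;=\; (\gamma\alpha)(a)\,\gamma(x) \;=\; a \cdot \gamma(x).
\]
Symmetrically, for the right action $x \cdot b = x\beta(b)$ on $_{\alpha}A_{\beta}$ and $y \cdot b = y\,(\gamma\beta)(b)$ on $_{\gamma\alpha}A_{\gamma\beta}$, one has $\gamma(x \cdot b) = \gamma(x)\, (\gamma\beta)(b) = \gamma(x) \cdot b$. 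Thus $\gamma$ is an $A$-$A$-bimodule map, and since it is also a weak* homeomorphic complete isometry, it is the required isomorphism; its inverse is $\gamma^{-1}$ regarded as a map back.

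There is essentially no obstacle: once one thinks of using $\gamma$ itself, everything reduces to the fact that $\gamma$ is a weak* continuous completely isometric homomorphism. The only small subtlety to flag is that the lemma, although stated as a completely $A$-$A$-isometric isomorphism, should also be read as weak* homeomorphic, which is automatic here since $\gamma$ is weak* continuous (and so is $\gamma^{-1}$, by the Krein--Smulian argument recalled earlier in the excerpt, or simply because $\gamma^{-1} \in \mathrm{Aut}(A)$ by hypothesis).
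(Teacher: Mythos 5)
Your proof is correct and takes exactly the approach of the paper, which simply states that the map $\gamma$ is the required isomorphism; you have merely written out the routine verification that $\gamma$ intertwines the two bimodule actions. Nothing further is needed.
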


\begin{proof}
The map  $\gamma$ is the required isomorphism.
\end{proof}

\begin{lemma} \label{lebe}
For $\theta_1, \theta_2 \in $Aut$(A)$,
$A_{\theta_1} \otimes^{\sigma h}_{A} A_{\theta_2}$
$\cong$ $A_{\theta_1\theta_2}$ completely $A$-$A$-isometrically.
\end{lemma}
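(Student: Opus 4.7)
My plan is to exhibit explicit mutually inverse $A$-$A$-bimodule maps between the two sides and then verify that both are completely contractive and weak* continuous, so that both are in fact complete isometries and weak* homeomorphisms.

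First, I would define the candidate isomorphism $\mu : A_{\theta_1} \otimes^{\sigma h}_A A_{\theta_2} \to A_{\theta_1 \theta_2}$ by $\mu(a \otimes b) = a\, \theta_1(b)$. To see this is well-defined I would apply the universal property of $\otimes^{\sigma h}_A$ (Theorem 3.3 in \cite{BK1}, as cited) to the bilinear map $u(a,b) = a\,\theta_1(b)$ from $A_{\theta_1} \times A_{\theta_2}$ to $A_{\theta_1 \theta_2}$. This map is $A$-balanced since $u(a \cdot c, b) = u(a\theta_1(c), b) = a\theta_1(c)\theta_1(b) = a\theta_1(cb) = u(a, c \cdot b)$, is separately weak* continuous since $\theta_1$ is weak* continuous and multiplication on $A$ is separately weak* continuous, and is completely contractive because it factors as $\mathrm{id} \times \theta_1$ followed by multiplication in $A$, each of which is a complete contraction (indeed $\theta_1$ is a complete isometry). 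A quick check shows $\mu$ is an $A$-$A$-bimodule map between $A_{\theta_1} \otimes^{\sigma h}_A A_{\theta_2}$ and $A_{\theta_1 \theta_2}$, using that the right action on $A_{\theta_2}$ is twisted by $\theta_2$ and the right action on $A_{\theta_1 \theta_2}$ by $\theta_1 \theta_2$.

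Next, I would define the candidate inverse $\nu : A_{\theta_1 \theta_2} \to A_{\theta_1} \otimes^{\sigma h}_A A_{\theta_2}$ by $\nu(a) = a \otimes 1$. Complete contractivity is immediate from $\|a \otimes 1\|_{\sigma h} \leq \|a\|\|1\| = \|a\|$ (matricially as well). Weak* continuity follows from separate weak* continuity of the canonical bilinear map into the tensor product. Verifying that $\nu$ is an $A$-$A$-bimodule map amounts to checking that the right action $a \cdot c = a(\theta_1\theta_2)(c)$ on $A_{\theta_1\theta_2}$ matches the computation $\nu(a) \cdot c = a \otimes \theta_2(c) = a \otimes (1 \cdot \theta_2(c))$, and then rebalancing across $\otimes^{\sigma h}_A$ using the twisted right action on $A_{\theta_1}$ to pull $\theta_2(c)$ across as $\theta_1(\theta_2(c)) = (\theta_1 \theta_2)(c)$.

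Finally, I would verify that $\mu$ and $\nu$ are mutual inverses. The composition $\mu \circ \nu$ sends $a \mapsto a \otimes 1 \mapsto a\theta_1(1) = a$, immediately. For $\nu \circ \mu$ on elementary tensors, $\nu(\mu(a \otimes b)) = a\theta_1(b) \otimes 1$; using the $A$-balancing across $\otimes^{\sigma h}_A$ with the twisted right action on $A_{\theta_1}$, we have $a\theta_1(b) \otimes 1 = (a \cdot b) \otimes 1 = a \otimes (b \cdot 1) = a \otimes b$. This identity on a weak* dense set of elementary tensors extends to the whole tensor product by weak* continuity of both maps. Since each of $\mu$ and $\nu$ is a completely contractive weak* continuous bimodule map and they are mutually inverse, each is a completely isometric weak* homeomorphism. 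The main thing to take care with is getting the twist on the right action bookkept correctly when identifying $a\theta_1(b)$ with $a \cdot b$ in $A_{\theta_1}$; this is where the exponents combine to give $\theta_1 \theta_2$.
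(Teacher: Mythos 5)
Your proof is correct. It takes a more hands-on route than the paper: the paper's proof is a three-line chain of isomorphisms, first rewriting $A_{\theta_1}$ as ${}_{\theta_1^{-1}}A$ via Lemma \ref{u}, then collapsing ${}_{\theta_1^{-1}}A \otimes^{\sigma h}_A A_{\theta_2} \cong {}_{\theta_1^{-1}}A_{\theta_2}$ by the unitality of the module tensor product, and finally applying Lemma \ref{u} again with $\gamma = \theta_1$ to land on $A_{\theta_1\theta_2}$. You instead write down the composite isomorphism explicitly, $\mu(a \otimes b) = a\,\theta_1(b)$ with inverse $a \mapsto a \otimes 1$, and verify balancedness, the bimodule property, complete contractivity, weak* continuity, and mutual inversion directly. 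The two arguments produce the same map; what yours buys is self-containedness (no appeal to Lemma \ref{u}) and an explicit formula for the isomorphism, at the cost of more bookkeeping with the twisted actions --- which you handle correctly, in particular the key balancing step $a\theta_1(b) \otimes 1 = (a \cdot b) \otimes 1 = a \otimes b$. What the paper's route buys is brevity and the reuse of the change-of-rings lemma, which also clarifies conceptually why the exponents compose.
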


\begin{proof}
From Lemma \ref{u},
$A_{\theta_1} \otimes^{\sigma h}_{A} A_{\theta_2} $
$\cong$ $_{\theta_1^{-1} } A \otimes^{\sigma h}_{A} A_{\theta_2}$
$\cong$ $_{\theta_1^{-1}} A_{\theta_2}$ $\cong$ $A_{\theta_1 \theta_2}$.
 \end{proof}

\begin{proposition}
The collection $\{ A_{\theta} : \theta \in {\rm Aut}(A) \}$  constitutes a subgroup
of {\rm Pic}$_{w}(A)$, which is isomorphic
to the group Aut$(A)$ of weak* continuous completely
 isometric automorphisms of $A$.
\end{proposition}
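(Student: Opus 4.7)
The plan is to realize the subgroup as the image of a group homomorphism $\Theta\colon \mathrm{Aut}(A)\to \mathrm{Pic}_w(A)$ defined by $\theta\mapsto [A_\theta]$, and then to verify that $\Theta$ is injective. That $\Theta$ takes values in $\mathrm{Pic}_w(A)$ is Lemma \ref{t}, and the multiplicativity $\Theta(\theta_1)\Theta(\theta_2)=[A_{\theta_1}\otimes^{\sigma h}_A A_{\theta_2}]=[A_{\theta_1\theta_2}]=\Theta(\theta_1\theta_2)$ is Lemma \ref{lebe}. Since $A_{\mathrm{id}}=A$ represents the identity of $\mathrm{Pic}_w(A)$ and (combining Lemmas \ref{t} and \ref{lebe}) $[A_\theta]^{-1}=[A_{\theta^{-1}}]$, the image of $\Theta$ contains the identity, is closed under products, and is closed under inverses; so it is precisely the stated subgroup, and $\Theta$ is a surjective group homomorphism onto it.

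For injectivity, by multiplicativity it suffices to show that $A_\tau\cong A$ in $\mathrm{Pic}_w(A)$ forces $\tau=\mathrm{id}_A$. Let $\Phi\colon A_\tau\to A$ be a completely isometric weak*-homeomorphic $A$-$A$-bimodule isomorphism and set $u:=\Phi(1)\in A$. Left $A$-linearity gives $\Phi(a)=au$; right $A$-linearity, applied with $x=1$ to the twisted right action $x\cdot a=x\tau(a)$ on $A_\tau$, gives $\Phi(\tau(a))=ua$. Combining these yields $\tau(a)u=ua$ for all $a\in A$. Running the same analysis on $\Phi^{-1}$ produces $v:=\Phi^{-1}(1)\in A$ with $uv=vu=1$, so $u$ is invertible and $\tau=\mathrm{Ad}_{u^{-1}}$.

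The main obstacle is then to conclude $\tau=\mathrm{id}_A$, i.e.\ that $u$ is central. My plan is to exploit that both $a\mapsto au$ and $a\mapsto au^{-1}$ are, via $\Phi$ and $\Phi^{-1}$, completely isometric bijections of $A$ onto itself, so that $u$ behaves as a two-sided unitary multiplier. Extending the identity $\tau(a)u=ua$ matricially through the operator space structure on $A_\tau$ (the supremum formula displayed after Definition \ref{wrig}) and invoking the rigidity of unitary multipliers of dual operator algebras as in \cite{DBbook}, one expects $u$ to commute with every element of $A$. Centrality of $u$ then yields $\tau=\mathrm{id}_A$, completing the injectivity and hence the claimed isomorphism $\mathrm{Aut}(A)\cong \{A_\theta:\theta\in\mathrm{Aut}(A)\}$.
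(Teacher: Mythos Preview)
Your first paragraph is correct and is essentially all that the paper's own proof contains: the proposition is deduced in one line from Lemma~\ref{lebe} (together with Lemma~\ref{t}), giving that $\theta\mapsto[A_\theta]$ is a group homomorphism whose image is the stated subgroup.

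Your attempt to prove injectivity, however, runs into a genuine obstruction rather than a mere gap. You correctly reduce the question to showing that an invertible $u\in A$ for which right multiplication by $u$ and by $u^{-1}$ are both complete isometries must be central. But no such principle holds. Take $A=M_2(\mathbb{C})$ and $u$ any non-scalar unitary; then $\tau=\mathrm{Ad}_u$ is a nontrivial weak* continuous completely isometric automorphism, yet $x\mapsto xu$ is a completely isometric $A$-$A$-bimodule isomorphism $A_\tau\to A$. Thus $\Theta$ is not injective for noncommutative $A$, and the literal isomorphism with $\mathrm{Aut}(A)$ fails; the image is only $\mathrm{Aut}(A)$ modulo the subgroup of such ``inner'' automorphisms. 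Your appeal to ``rigidity of unitary multipliers'' cannot succeed here, because the conclusion you want is simply false in this generality.

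The paper's one-line proof does not address injectivity either, so this is not a discrepancy between your argument and the paper's but a lacuna in the statement itself. Note, though, that the remainder of the section concerns commutative $A$, and there your argument is already complete: from $\tau(a)u=ua$ and commutativity you get $\tau(a)u=au$, and cancelling the invertible $u$ gives $\tau=\mathrm{id}$. So in the commutative case your proof is correct and slightly more detailed than the paper's; in the general case the extra isomorphism claim you are trying to establish does not hold.
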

\begin{proof}
This follows from the above lemma.
\end{proof}

If $X$ is a weak* Morita equivalence $A$-$A$-bimodule,
and if $\theta \in $Aut$(A)$, then let $X_{\theta}$ be 
$X$ with the same left module action, but with right module action 
changed to $x \cdot a = x \theta(a)$.  
  
\begin{lemma}
If $X$ is a weak* Morita equivalence bimodule,
then $X_{\theta} \cong X \otimes^{\sigma h}_{A} A_{\theta}$
completely $A$-$A$-isometrically.
Also,  $X_{\theta}$ is a weak*  Morita equivalence
bimodule.
\end{lemma}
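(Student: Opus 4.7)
The plan is to exhibit a natural $A$-$A$-bimodule map $\Phi : X \otimes^{\sigma h}_{A} A_{\theta} \to X_{\theta}$ sending $x \otimes a$ to $xa$ (using the original right $A$-action on $X$), verify that it is a completely isometric weak* homeomorphism, and then derive the Morita equivalence assertion by composing with Lemma \ref{t}.

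First I would verify that $\Phi$ is well-defined and $A$-$A$-bilinear. The balancing in $X \otimes^{\sigma h}_{A} A_{\theta}$ uses the standard right action of $A$ on $X$ and the standard left action of $A$ on $A_{\theta}$, both of which are just multiplication in $A$, so $\Phi$ descends from the algebraic multiplication map to a separately weak* continuous bimodule map by the universal property of $\otimes^{\sigma h}_{A}$. Left $A$-linearity is inherited from $X$, and for the right action one computes
\[
\Phi\bigl((x \otimes a) \cdot b\bigr) = \Phi\bigl(x \otimes a\theta(b)\bigr) = x a \theta(b) = (xa) \cdot_{\theta} b,
\]
matching the definition of the twisted action on $X_{\theta}$.

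Next I would argue that $\Phi$ is a completely isometric weak* homeomorphism. Since $X$ is a weak* Morita equivalence bimodule, it is in particular a right $w^*$-rigged $A$-module, and so the canonical multiplication map $X \otimes^{\sigma h}_{A} A \to X$ is a completely isometric weak* homeomorphic $A$-$A$-bimodule isomorphism (a standard fact from \cite{BK2}, essentially the identity property of $A$ for $\otimes^{\sigma h}_{A}$ on $w^*$-rigged modules). As a dual operator space and as a left $A$-module, $A_{\theta}$ coincides with $A$, and the normal module Haagerup tensor product $X \otimes^{\sigma h}_{A} A_{\theta}$ depends only on the left $A$-module structure of $A_{\theta}$. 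Hence $\Phi$ agrees, at the level of underlying dual operator spaces and left $A$-module structures, with the canonical multiplication isomorphism; the right action is the only twist, and this is precisely the twist defining $X_{\theta}$.

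For the second assertion, by Lemma \ref{t} the bimodule $A_{\theta}$ is itself a weak* Morita equivalence $A$-$A$-bimodule with inverse $A_{\theta^{-1}}$. Weak* Morita equivalences compose via $\otimes^{\sigma h}_{A}$: using associativity of the normal module Haagerup tensor product and Lemma \ref{lebe}, an inverse bimodule for $X \otimes^{\sigma h}_{A} A_{\theta}$ is given by $A_{\theta^{-1}} \otimes^{\sigma h}_{A} \tilde{X}$, since
\[
(X \otimes^{\sigma h}_{A} A_{\theta}) \otimes^{\sigma h}_{A} (A_{\theta^{-1}} \otimes^{\sigma h}_{A} \tilde{X}) \cong X \otimes^{\sigma h}_{A} A \otimes^{\sigma h}_{A} \tilde{X} \cong X \otimes^{\sigma h}_{A} \tilde{X} \cong A,
\]
and symmetrically for the other composition (using $\tilde{X} \otimes^{\sigma h}_{A} X \cong A$ from the Morita context for $X$ and $A_{\theta} \otimes^{\sigma h}_{A} A_{\theta^{-1}} \cong A$ from Lemma \ref{lebe}). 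Combined with the first step, this identifies $X_{\theta}$ as a weak* Morita equivalence bimodule. The only delicate point I anticipate is the left/right bookkeeping under the twist and confirming that $X \otimes^{\sigma h}_{A} A \cong X$ produces a genuine bimodule isomorphism, but both are built-in features of the $w^*$-rigged framework recalled at the start of the paper.
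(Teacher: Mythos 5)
Your proposal is correct and follows essentially the same route as the paper: the multiplication map induced by the universal property of $\otimes^{\sigma h}_{A}$ gives the bimodule isomorphism (the paper simply exhibits the inverse $x \mapsto x \otimes 1$ directly, while you appeal to the canonical identification $X \otimes^{\sigma h}_{A} A \cong X$, which amounts to the same thing), and the Morita equivalence is obtained exactly as in the paper via associativity and Lemma \ref{lebe}, with your inverse bimodule $A_{\theta^{-1}} \otimes^{\sigma h}_{A} \tilde{X}$ matching the paper's ${}_{\theta^{-1}}Y$ since $\tilde{X}$ may be taken to be $Y$.
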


\begin{proof}  
The module action $(x, a) \mapsto x a$ is a completely contractive, separately weak* continuous 
balanced bilinear map, so by the universal property of the normal module Haagerup tensor product it induces a completely contractive weak* continuous linear map $m :  X \otimes^{\sigma h}_{A} A_{\theta}  \to  X_{\theta}$. 
There is a completely contractive inverse map $x \mapsto x \otimes 1$, so that  
 $m$ is a surjective complete isometry,
and it is easily seen to be an $A$-$A$-bimodule map. If $(A, A, X, Y)$ is a weak Morita context as in Definition \ref{w*Mor} then by Lemma \ref{t}, and properties of the normal module Haagerup tensor product, $(A, A, X_{\theta},  _{\theta^{-1}}Y)$  is a Morita context.  To see this, note that by  associativity of the normal module Haagerup tensor product, and some of the lemmas above in the present section, 
 $$(X \otimes^{\sigma h}_{A} A_{\theta})  \otimes^{\sigma h}_{A} ( A_{\theta ^{-1}}  \otimes^{\sigma h}_{A} Y) \cong X \otimes^{\sigma h}_{A}  A \otimes^{\sigma h}_{A} Y \cong  X \otimes^{\sigma h}_{A} Y  \cong A$$ completely isometrically and weak*-homeomorphically.
Similarly 
 $$(A_{\theta ^{-1}}   \otimes^{\sigma h}_{A} Y) \otimes^{\sigma h}_{A}  (X \otimes^{\sigma h}_{A}  A_{\theta})  \cong   A_{\theta ^{-1}}   \otimes^{\sigma h}_{A}  A \otimes^{\sigma h}_{A} A_{\theta} \cong  \, A_{\theta ^{-1}}   \otimes^{\sigma h}_{A} A_{\theta} \cong A$$ 
completely isometrically and weak*-homeomorphically.  
\end{proof}

An $A$-$A$-bimodule $X$ will be called `symmetric'
if $ax = xa$ for all $a \in A$, $x \in X$.

\begin{proposition} \label{gold}  
 For a  commutative dual operator algebra $A$,  ${\rm Pic}_{w}(A)$ is a
semidirect product of ${\rm Aut}(A)$ and the subgroup of
${\rm Pic}_{w}(A)$ consisting of symmetric equivalence
bimodules. Thus, every $V \in {\rm Pic}_{w}(A)$ equals $X_{\theta}$,
for a symmetric $X \in {\rm Pic}_{w}(A)$, and 
some $\theta \in {\rm Aut}(A)$.
\end{proposition}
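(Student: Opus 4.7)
The plan is to construct, for each $V \in {\rm Pic}_w(A)$, an automorphism $\theta_V \in {\rm Aut}(A)$ satisfying $xa = \theta_V(a) x$ for all $x \in V$ and $a \in A$; then $X := V_{\theta_V^{-1}}$ will be a symmetric equivalence bimodule with $V \cong X_{\theta_V} \cong X \otimes^{\sigma h}_{A} A_{\theta_V}$, which together with normality of the symmetric subgroup gives the semidirect product decomposition.

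To build $\theta_V$, note that for each $a \in A$, right multiplication $R_a : x \mapsto xa$ lies in $\mathbb{B}(V)_A$. Applying Lemma \ref{sdrel} (right-handed form with $Y = Z = V$) together with the Morita isomorphism $V \otimes^{\sigma h}_A \tilde V \cong A$ yields $\mathbb{B}(V)_A \cong A$ as dual operator algebras; a routine chase identifies this as inverse to the left-action map $\lambda : A \to \mathbb{B}(V)_A$, $\lambda(a)(x) = ax$. Hence $\lambda$ is itself an isomorphism. Since $A$ is commutative and $R_a$ commutes with every $\lambda(b)$, $R_a$ lies in $\lambda(A)$, so we set $\theta_V(a) := \lambda^{-1}(R_a)$. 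The resulting $\theta_V$ is a weak*-continuous completely contractive unital homomorphism characterized by $xa = \theta_V(a) x$.

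To show $\theta_V$ is invertible (so $\theta_V \in {\rm Aut}(A)$), apply the same recipe to $\tilde V$ to obtain $\theta_{\tilde V} : A \to A$ with $\tilde x \cdot a = \theta_{\tilde V}(a) \cdot \tilde x$. On the Morita pairing $(\cdot, \cdot): \tilde V \times V \to A$ this gives $(\tilde x, ay) = \theta_{\tilde V}(a)(\tilde x, y)$, and then
\[
a(\tilde x, y) = (\tilde x, y) a = (\tilde x, ya) = (\tilde x, \theta_V(a) y) = \theta_{\tilde V}(\theta_V(a))(\tilde x, y),
\]
using commutativity of $A$ and the defining relations $ya = \theta_V(a) y$ and $(\tilde x, a'y) = \theta_{\tilde V}(a')(\tilde x, y)$. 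Since the range of the pairing is onto $A$ by the Morita isomorphism, $\theta_{\tilde V} \circ \theta_V = {\rm id}_A$; by symmetry $\theta_V \circ \theta_{\tilde V} = {\rm id}_A$. Setting $X := V_{\theta_V^{-1}}$, one checks $x \cdot_X a = x \theta_V^{-1}(a) = \theta_V(\theta_V^{-1}(a)) x = ax$, so $X$ is symmetric; by the preceding lemma $X \in {\rm Pic}_w(A)$, and by Lemma \ref{lebe} combined with the preceding lemma, $V \cong X_{\theta_V} \cong X \otimes^{\sigma h}_A A_{\theta_V}$.

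Finally, let $S \subset {\rm Pic}_w(A)$ denote the subgroup of symmetric equivalence bimodules. Normality of $S$: for $X \in S$ and $\theta \in {\rm Aut}(A)$, Lemma \ref{u} and associativity of $\otimes^{\sigma h}_A$ identify $A_\theta \otimes^{\sigma h}_A X \otimes^{\sigma h}_A A_{\theta^{-1}}$ with $X$ carrying the twisted actions $a \cdot x = \theta^{-1}(a) \cdot_X x$ and $x \cdot a = x \cdot_X \theta^{-1}(a)$, which coincide by the symmetry of $X$. Trivial intersection: if $A_\theta$ is symmetric then $a = a \cdot 1 = 1 \cdot a = \theta(a)$, forcing $\theta = {\rm id}$. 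Combined with the factorization $V \cong X \otimes^{\sigma h}_A A_{\theta_V}$, this yields the semidirect product. The main delicate point is the identification $\lambda : A \cong \mathbb{B}(V)_A$; once this is in hand via Lemma \ref{sdrel} and the Morita isomorphism, the remaining steps are direct computations with the twisting relations.
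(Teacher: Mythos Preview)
Your argument is correct and follows essentially the same strategy as the paper: both proofs hinge on the identification $A \cong \mathbb{B}(V)_A$ (you obtain it via Lemma~\ref{sdrel} together with the Morita isomorphism $V \otimes^{\sigma h}_A \tilde V \cong A$, while the paper cites the equivalent result from \cite{BK1}), and both then define $\theta_V$ by pulling back right multiplication through this isomorphism. The only structural difference is in how the semidirect product is packaged: the paper shows that $V \mapsto \theta_V$ is a surjective group homomorphism ${\rm Pic}_w(A) \to {\rm Aut}(A)$ with section $\theta \mapsto A_\theta$ and kernel the symmetric bimodules, whereas you instead verify directly that the symmetric subgroup is normal, meets $\{A_\theta\}$ trivially, and together with it generates ${\rm Pic}_w(A)$ via the factorization $V \cong X \otimes^{\sigma h}_A A_{\theta_V}$---an equivalent route. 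One small remark: the clause ``since $A$ is commutative and $R_a$ commutes with every $\lambda(b)$'' is superfluous once you have established that $\lambda$ is surjective onto $\mathbb{B}(V)_A$; commutativity of $A$ is genuinely used later (e.g.\ to ensure $\theta_V$ is a homomorphism rather than merely an anti-homomorphism, and in the pairing computation).
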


\begin{proof} 
Suppose that $X$ is any weak* Morita equivalence
$A$-$A$-bimodule. Then any $w^*$-continuous right $A$-module map $T
: X \to X$ is simply left multiplication by a fixed element of $A$.
In fact  we have $A \cong
CB^{\sigma}(X)_A$, via a map $L : A \to CB(X)$  
(see e.g.\ Theorem 3.6 in \cite{BK1}). For fixed $a \in A$, the map $x \mapsto xa$ on
$X$, is a $w^*$-continuous completely bounded $A$-module map with
completely bounded norm = $\lVert a \rVert$. Hence by the above
identification, there exists a unique $a' \in A$ such that $a' x = x
a$ for all $x \in X$ and $\lVert a' \rVert = \lVert a \rVert$.
  Define $\theta(a) = a'$, then we claim that 
$\theta$ is a weak* continuous completely 
isometric unital automorphism of $A$.  To see that $\theta$ is a homomorphism,
  let $a_1, a_2 \in A$ and let
  $T, S$ and $U$ be maps from $X$ to $X$ simply
  given by right multiplication
  with $a_1, a_2$ and $a_1a_2$ respectively.
  Let $\theta(a_1) = a_1'$,
  $\theta(a_2) = a_2'$ and $\theta(a_1a_2) = a_3$.
  Since $U = ST$, we have $L(U) = L(T) L(S)$
  (recall $L$ is an anti-homomorphism,   see e.g.\ Theorem 3.6 in \cite{BK1}).  This implies $a_3 = a_1' a_2'$, that is,  $\theta(a_1a_2) = \theta(a_1) \theta(a_2)$.
 
 Note that $$\Vert [ \theta(a_{ij}) x_{kl} ] \Vert = \Vert [ x_{kl} a_{ij} ] \Vert \leq \Vert [ x_{kl} ] \Vert
 \Vert [a_{ij} ] \Vert,$$
and so using the isomorphism  $A \cong
CB^{\sigma}_{A}(X)$ above we see that $\theta$ is completely contractive.  
That $\theta$ is completely 
isometric follows (e.g.\ by a similar argument for $\theta^{-1}$). 
For the weak* continuity, note that 
if we have a bounded net $a_t \buildrel  w^* \over \to a $ then
$x a_t \buildrel  w^* \over \to xa$ for all $x \in X$.
Since the map $L$ above is a weak* homeomorphism, 
we deduce that $\theta(a_t) \buildrel  w^* \over \to \theta(a)$.   
  That $\theta$ is surjective follows by symmetry.
  
  Thus we have defined a surjective group homomorphism
  $Pic_{w}(A) \to Aut(A)$
 taking $X \mapsto \theta$.
  To see that this does define a group homomorphism,
  let $X, Y \in Pic_{w}(A)$.  Let
  $X \otimes^{\sigma h}_{A} Y \mapsto \theta$
  and $X \mapsto \theta_1, Y \mapsto \theta_2$
  under the above identification.
   We need to show that $\theta = \theta_1 \theta_2$.
   Let $ a \in A $ and
   $\theta(a) = a'$, $\theta_2(a) = a_2$ and $\theta_1(a_2) = a_1$.
We need to show that $a' = a_1$. Consider a rank one tensor $x
\otimes y \in X \otimes^{\sigma h}_{A} Y$ for $x \in X$ and $y \in
Y$. Then $a'x \otimes y = x \otimes ya$, $a_1x = xa_2$ and $a_2y =
ya$. We have $a'(x \otimes y) = a'x \otimes y$ and $$a_1(x \otimes
y) = a_1x \otimes y = xa_2 \otimes y = x \otimes a_2 y = x \otimes
ya.$$ Since finite rank tensors are weak* dense in $X
\otimes^{\sigma h}_{A} Y$, we have $a'z = a_1 z$ for all $z \in X
\otimes^{\sigma h}_{A}Y$. This implies $a' = a_1$ which proves the
required assertion.

  From Lemma \ref{t}, for $\theta \in Aut(A)$ we have
  $A_{\theta} \in Pic_{w}(A)$, hence the above
  homomorphism has a 1-sided inverse $Aut(A) \to Pic_{w}(A)$.
 The above homomorphism restricted
 to modules  of the form $A_{\theta}$ for $\theta \in Aut(A)$ is the identity map.
 That is,  for $\theta \in Aut(A)$   the above homomorphism
 takes the weak*  Morita equivalence
 bimodule $A_{\theta}$ to $\theta$. 
 Moreover the kernel of the homomorphism equals the 
symmetric  equivalence bimodules. This proves the `semidirect
product' statement. For the last assertion, note that $X =
(X_{\theta^{-1}})_{\theta}$. From the above, $x \theta^{-1}(a) =
\theta(\theta^{-1}(a)) x = ax$,   which proves that
$X_{\theta^{-1}}$ is symmetric.
\end{proof}

{\bf Remark.}   Similar results will hold for strong Morita equivalence bimodules in the sense of
\cite{BMP} over a norm closed operator algebra $A$, and their associated Picard group.

\bigskip

Thus we may assume henceforth that $X$ is symmetric, if $A$ is a commutative dual operator algebra.

\begin{proposition} \label{si}
 Suppose that we have a weak* Morita context
$(A, A, X, Y)$, with $A$ a weak* closed subalgebra of a
commutative von Neumann algebra $M$.  Suppose that $A$ generates
$M$ as a von Neumann algebra.  Then every
symmetric weak equivalence $A$-$A$-bimodule is completely  isometrically
$A$-$A$-isomorphic to a weak* closed $A$-$A$-subbimodule of $M$.
\end{proposition}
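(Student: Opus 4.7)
The plan is to embed $X$ into $M$ by showing that the $W^*$-dilation $\hat X := X \otimes_{\theta} M$ is isomorphic to $M$ itself (as an $M$-$M$-bimodule), and then composing the canonical injection $X \hookrightarrow \hat X$, $x \mapsto x \otimes 1$, with this isomorphism.  Setting $Y = \tilde X$ and $\hat Y := M \otimes_{\theta} Y$, Theorem \ref{dimoq} shows that $\hat X$ is a right $W^*$-module over $M$ implementing a Rieffel Morita equivalence between $M$ and $\mathbb{B}(\hat X)$, since the pairing $Y \times X \to A$ is onto $A$ and $A$ generates $M$.

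Next I would exploit the symmetric hypothesis: from $ax = xa$ in $X$ one gets on elementary tensors $a \cdot (x \otimes m) = ax \otimes m = xa \otimes m = x \otimes am = (x \otimes m) \cdot a$, and by weak*-continuity the extended left $M$-action on $\hat X$ coincides with its right $M$-action (and similarly for $\hat Y$).  The central step is to show that $(M, M, \hat X, \hat Y)$ is itself a weak* Morita context.  Using associativity of $\otimes^{\sigma h}$ (Proposition 2.9 of \cite{BK1}) together with $M \otimes^{\sigma h}_{M} M \cong M$, a symmetric flip $M \otimes^{\sigma h}_{A} Y \cong Y \otimes^{\sigma h}_{A} M$ coming from the symmetric structure of $Y$ and commutativity of $A$ and $M$, and finally the Morita identification $X \otimes^{\sigma h}_{A} Y \cong A$, one computes
\begin{equation*}
\hat X \otimes^{\sigma h}_{M} \hat Y \;\cong\; X \otimes^{\sigma h}_{A} M \otimes^{\sigma h}_{A} Y \;\cong\; X \otimes^{\sigma h}_{A} Y \otimes^{\sigma h}_{A} M \;\cong\; A \otimes^{\sigma h}_{A} M \;\cong\; M,
\end{equation*}
and similarly $\hat Y \otimes^{\sigma h}_{M} \hat X \cong M$.

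Thus $\hat X$ is a symmetric weak* Morita equivalence $M$-$M$-bimodule over the commutative algebra $M$.  Essentially the argument of Proposition \ref{gold} (or equivalently, the $W^*$-module structure theory over a commutative von Neumann algebra combined with the fact that $\mathbb{B}(\hat X) \cong \hat X \otimes^{\sigma h}_{M} \hat Y \cong M$ is commutative) then shows that $\hat X \cong pM$ for some projection $p \in M$; fullness forces $p = 1$, so $\hat X \cong M$ as $M$-bimodules.  The canonical map $X \hookrightarrow \hat X$, being a completely isometric weak*-continuous $A$-bimodule map (standard for $W^*$-dilations; see Section 5 of \cite{BK2}), composes with this isomorphism to give the desired embedding of $X$ into $M$; the image is weak*-closed by Krein-Smulian.

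The main obstacle is the flip step $M \otimes^{\sigma h}_{A} Y \cong Y \otimes^{\sigma h}_{A} M$ at the level of the normal module Haagerup tensor product: this tensor product is not symmetric in general, so one has to check carefully that the symmetric-bimodule structure of $Y$ together with the commutativity of $A$ and $M$ makes the algebraic flip — manifestly $A$-balanced and separately weak*-continuous — extend to a completely isometric weak* homeomorphism via the universal property of $\otimes^{\sigma h}_{A}$ from \cite{EP}.
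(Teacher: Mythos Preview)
Your overall strategy---dilate $X$ to an $M$-$M$-equivalence bimodule, show the dilation is symmetric, use that the only symmetric element of $\mathrm{Pic}_w$ of a commutative von Neumann algebra is $M$ itself, and pull back the identification with $M$ along the embedding $X \hookrightarrow \hat X$---is precisely the route the paper takes.  The differences are in the execution, and one of them is a real gap.

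The gap is exactly the flip $M \otimes^{\sigma h}_{A} Y \cong Y \otimes^{\sigma h}_{A} M$ that you flag.  The normal module Haagerup tensor product is genuinely asymmetric (it records row--column factorizations), and neither the commutativity of $A$ and $M$ nor the symmetry of $Y$ as an $A$-bimodule makes the algebraic flip completely contractive in both directions; the universal property of \cite{EP} only gives you a map once you already know the flipped bilinear map is completely contractive, which is the whole question.  The paper avoids this entirely: it quotes Theorem~5.5 of \cite{BK1}, which directly asserts that the dilation $W = M \otimes^{\sigma h}_{A} X$ is a weak* Morita equivalence $M$-$M$-bimodule, and Theorem~3.5 of \cite{UK1} for the completely isometric weak*-closed embedding $X \hookrightarrow W$.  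You should cite these rather than try to rebuild the $M$-level Morita context by hand.

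A second, smaller issue is your passage from $A$-symmetry to $M$-symmetry of the dilation ``by weak*-continuity of the extended left $M$-action''.  This is not well-posed as stated, since $A$ is typically \emph{not} weak* dense in $M$ (e.g.\ $H^\infty \subset L^\infty$), so there is no automatic extension of the left $A$-action to a left $M$-action.  The paper handles this with the $*$-structure of the linking von Neumann algebra: since $Z = W^*$ there and $az = za$ for $a \in A$, taking adjoints gives $a^* w = w a^*$; thus $W$ commutes with both $A$ and $A^*$, hence with the von Neumann algebra $M$ they generate.  Finally, for the conclusion that a symmetric equivalence $M$-bimodule is $M$ itself, the paper gives a short self-contained argument via commutative WTROs (producing a unitary $u \in W$ with $W = uM$), rather than appealing to Proposition~\ref{gold}, which by itself only gives the semidirect product decomposition, not the triviality of the symmetric part.
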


\begin{proof} 
From Theorem 5.5 in \cite{BK1}, $X$
dilates to a weak Morita equivalence $M$-$M$-bimodule
$W = M \otimes^{\sigma h}_{A} X$.
Let $(M, M, W, Z)$
be the corresponding $W^*$-Morita context.
From Theorem 3.5 in \cite{UK1}, $W$ contains $X$
completely isometrically as a weak* closed $A$-submodule; and indeed it is clear that this
is as a sub-bimodule over $A$.  
It is helpful to consider the inclusion
$$\left[
\begin{array}{ccl}
A & X \\
Y & A 
\end{array}
\right]   \subset    \left[
\begin{array}{ccl}
M & W \\
Z & M  
\end{array}
\right] $$
\noindent of linking algebras.
If $X$ is symmetric, then since $W = M \otimes^{\sigma h}_{A} X$
we have $wa = aw$ for all $w \in W, a \in A$.
Similarly for $Z  \cong Y \otimes^{\sigma h}_{A} M$,
 we have $z a = a z$. Since  $Z = W^{*}$, we have $wm^* = (mw^*)^*
= (w^*m)^* = m^* w$.
Therefore $xw = wx$ for all $w \in W$, $x \in M$.
Thus $W$ is a symmetric element of Pic$_{w}(M)$.

If  $M$ is a commutative von Neumann algebra, then it is well known that 
the Picard group of $M$  is just Aut$(M)$,
 and $M$ is the only symmetric element of Pic$_{w}(M)$.  We include a quick proof of this for the reader's 
 convenience.   Indeed suppose that  $Z$ is a
symmetric weak equivalence $M$-$M$-bimodule.
 Suppose that  $M$ is a von Neumann
algebra in $B(H)$, and let $K = Z \otimes_\theta H$, the induced representation.
Then $Z \subset B(H, K)$ is a WTRO, which is commutative in the sense of
\cite[Proposition 8.6.5]{DBbook}, and hence (see e.g.\ the proof of the last cited
result) $Z$ contains a unitary $u$ with $u u^* = I_K$ and $u^* u = I_H$.
Also, the map $R : z \mapsto u^* z$ is a completely isometric right $M$-module map
from $Z$ onto $M$.  That is, $Z = u M$.  Note that if $\theta : M \to B(Z)$ is
the left action
of $M$ on $Z$, then since $Z$ is symmetric we have $\theta(a) (u b) = u b a = u a u^* (u b)$,
for $a,b \in M$.   That is, $\theta(a)$ corresponds to $a \mapsto u a u^*
\in B(K)$.    Then $R(\theta(a) z) = u^* u a u^* z = a R(z)$, so that
$R$ is a bimodule map.  That is, $Z \cong M$ as equivalence $M$-$M$-bimodules.
Hence the Picard group of $M$  is just Aut$(M)$.

Putting the last two paragraphs together, we have proved that every  
symmetric $A$-$A$-bimodule `is' a weak* closed $A$-$A$-subbimodule of $M$.
\end{proof}   
 
\begin{corollary}  The weak Picard group of $H^\infty(\Ddb)$ is isomorphic 
to the group of conformal automorphisms of the disk.
\end{corollary}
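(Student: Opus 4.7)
The plan is to combine Propositions \ref{gold} and \ref{si} with classical invariant subspace theory for $H^\infty$ and the known automorphism group of $H^\infty(\mathbb{D})$. Since $H^\infty(\mathbb{D})$ is commutative, Proposition \ref{gold} gives a semidirect product decomposition of $\mathrm{Pic}_{w}(H^\infty(\mathbb{D}))$ into $\mathrm{Aut}(H^\infty(\mathbb{D}))$ and the subgroup of symmetric equivalence bimodules, so the task reduces to showing that the symmetric subgroup is trivial and that $\mathrm{Aut}(H^\infty(\mathbb{D}))$ is the conformal group of the disk.

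For the symmetric part, I would take $M = L^\infty(\mathbb{T})$. This is a commutative von Neumann algebra containing $H^\infty(\mathbb{D})$ as a weak* closed subalgebra; moreover $H^\infty + \overline{H^\infty}$ is weak* dense in $L^\infty$, so $H^\infty$ generates $L^\infty$ as a von Neumann algebra. Proposition \ref{si} then embeds any symmetric weak* equivalence $H^\infty$-bimodule $X$ as a weak* closed $H^\infty$-$H^\infty$-subbimodule of $L^\infty(\mathbb{T})$. By the Beurling-Helson classification of weak* closed $H^\infty$-invariant subspaces of $L^\infty$ (obtained directly, or by analyzing the $L^2$-closure and then intersecting with $L^\infty$), $X$ is either $\chi_E L^\infty$ for some measurable $E \subseteq \mathbb{T}$ or $\phi H^\infty$ for some unimodular $\phi \in L^\infty$. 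The first possibility is eliminated: the range of the pairing $\tilde{X} \times X \to H^\infty$ lies in $\chi_E L^\infty \cap H^\infty$, which is $\{0\}$ when $|E^c| > 0$ by the F.\ and M.\ Riesz theorem, while the case $E = \mathbb{T}$ (i.e.\ $X = L^\infty$) fails because the product of two $L^\infty$ functions need not lie in $H^\infty$, so no natural $H^\infty$-valued pairing making $L^\infty$ an equivalence bimodule can exist. In the remaining case $X = \phi H^\infty$, the map $\phi h \mapsto h$ is a completely isometric $H^\infty$-$H^\infty$-bimodule isomorphism onto $H^\infty$ (using that, $X$ being symmetric inside the commutative algebra $L^\infty$, both actions coincide with multiplication), so $X$ represents the identity of $\mathrm{Pic}_{w}(H^\infty(\mathbb{D}))$.

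This shows the symmetric subgroup is trivial, so $\mathrm{Pic}_{w}(H^\infty(\mathbb{D})) \cong \mathrm{Aut}(H^\infty(\mathbb{D}))$. It remains to identify the latter with $\mathrm{Aut}(\mathbb{D})$: every conformal automorphism $\varphi$ of $\mathbb{D}$ yields a weak* continuous completely isometric unital algebra automorphism $f \mapsto f \circ \varphi$ of $H^\infty(\mathbb{D})$, and conversely every such automorphism arises this way, a classical fact that follows from the identification of the weak* continuous characters of $H^\infty(\mathbb{D})$ with point evaluations on $\mathbb{D}$, so any such automorphism induces a biholomorphism of $\mathbb{D}$, i.e.\ a Möbius transformation.

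The step I expect to require the most care is the invocation of Beurling-Helson at the $L^\infty$ level; the cleanest route is to pass to the $L^2$-closure $X_2$ of $X$, apply the standard classification to obtain $X_2 = \chi_E L^2$ or $X_2 = \phi H^2$, and then recover $X = X_2 \cap L^\infty$ using weak* closedness of $X$ together with a bounded-approximation argument to upgrade $L^2$-density back to weak* closure in $L^\infty$.
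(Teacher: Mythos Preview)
Your proof is correct and follows essentially the same route as the paper: the semidirect product decomposition from Proposition \ref{gold}, the embedding of a symmetric bimodule into $L^\infty(\mathbb{T})$ via Proposition \ref{si}, the Beurling--Helson classification of weak* closed $H^\infty$-invariant subspaces, elimination of the $\chi_E L^\infty$ case, and the identification of $\mathrm{Aut}(H^\infty)$ with the M\"obius group. The only notable difference is in ruling out $X=\chi_E L^\infty$: the paper observes that such an $X$ cannot generate $L^\infty$ as an $L^\infty$-module (contradicting the dilation structure in Proposition \ref{si}), whereas you argue via F.\ and M.\ Riesz that the $H^\infty$-valued pairing would vanish---your sentence on the borderline case $E=\mathbb{T}$ is a bit loose, but it tightens immediately once you note (from the linking-algebra inclusion in Proposition \ref{si}) that the pairing is multiplication in $L^\infty$, so any nonzero $y\in\tilde X\subset L^\infty$ with $y\cdot L^\infty\subset H^\infty$ would be forced to vanish.
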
 

\begin{proof}  Since the monomial $z$ generates $A = H^\infty(\Ddb)$ as a dual algebra,
any automorphism $\theta$ of $A$ defines a map $\tau$ on the disk
by $\theta(f)(w) = f(\tau(w))$, so that $\tau = \theta(z) \in
H^\infty$.  By looking at $\theta^{-1}$, it is easy to see that
 $\tau$ is a conformal map.  Thus the group of conformal automorphisms of the disk
is isomorphic
to the group Aut$(A)$.
We will be done by Proposition \ref{gold}, if
 we can show that every symmetric equivalence $A$-$A$-bimodule
is $A$-$A$-isomorphic to $A$.

Let $M = L^\infty(\Tdb)$, and let $X$ be a 
symmetric equivalence $A$-$A$-bimodule.
By Proposition \ref{si},
$X$ can be taken to be a weak* closed  $A$-submodule of $M$.   By Beurling's theorem
it follows that $X$ is singly generated.    Indeed there is a function $k \in X$,
 which is either a projection or a unitary in $M$, with $X$ equal to the 
weak* closure of $Mk$ or $A k$ respectively \cite{Gam}.  If $k$ were a nontrivial projection $p \in M$, then $X M \subset k M$,
so that $X$ does not generate $Z$ as an $M$-module, which is a 
contradiction.   So $k$ is unitary, and hence, as in the proof of
Proposition \ref{si}, $X \cong A$ as dual operator $A$-$A$-bimodules. 
\end{proof}

 We thank Paul Muhly for a  discussion on the proof of the last result.

\end{document}